\providecommand{\U}[1]{\protect \rule{.1in}{.1in}}
\providecommand{\U}[1]{\protect \rule{.1in}{.1in}}
\newtheorem{corollary}{Corollary}
\newtheorem{proposition}{Proposition}
\newtheorem{theorem}{Theorem}
\numberwithin{equation}{section}
\newenvironment{proof}{\paragraph{Proof:}}{\hfill$\square$}
\journal{...}
\begin{document}
\begin{frontmatter}
\title{Global existence of solutions for an $m$-component reaction--diffusion system with a tridiagonal 2-Toeplitz diffusion matrix
and polynomially growing reaction terms}
\author[Tai1]{Salem Abdelmalek}
\author[Tai2]{Samir Bendoukha}
\address[Tai1]{Department of Mathematics, College of Sciences, Yanbu Taibah
University, Saudi Arabia. Email: sabdelmalek@taibahu.edu.sa \newline Department of Mathematics, University of Tebessa 12002 Algeria.}
\address[Tai2]{Department of Electrical Engineering, College of Engineering,
Yanbu, Taibah University, Saudi Arabia. Email: sbendoukha@taibahu.edu.sa}
\begin{abstract}
This paper is concerned with the local and global existence of solutions for a generalized $m$-component reaction--diffusion
system with a tridiagonal $2$--Toeplitz diffusion matrix and polynomial growth. We derive the eigenvalues and
eigenvectors and determine the parabolicity conditions in order to diagonalize the proposed system. We, then,
determine the invariant regions and utilize a Lyapunov functional to establish the global existence of solutions for the
proposed system. A numerical example is used to illustrate and confirm the findings of the study.
\end{abstract}
\begin{keyword}
Reaction--diffusion systems \sep Invariant regions \sep Diagonalization \sep Global existence \sep Lyapunov functional.
\end{keyword}
\end{frontmatter}

\section{\textbf{Introduction}\label{SecModel}}

In this study, we consider the generalized $m$-component reaction--diffusion
system with $m\geq2$:%
\begin{equation}
\dfrac{\partial U}{\partial t}-A\Delta U=F\left(  U\right)  , \label{Sys1}%
\end{equation}
in $\Omega \times \left(  0,+\infty \right)  $, where $\Omega$ is an open bounded
domain of class $C^{1}$ in $\mathbb{%
\mathbb{R}
}^{m}$ with boundary $\partial \Omega$. The diffusion matrix $A$ is assumed to
be of the form%
\begin{equation}
A=\left(
\begin{array}
[c]{cccccc}%
\alpha_{1} & \gamma_{1} & 0 & \cdots & \cdots & 0\\
\beta_{1} & \alpha_{2} & \gamma_{2} & \ddots &  & \vdots \\
0 & \beta_{2} & \alpha_{1} & \gamma_{1} & \ddots & \vdots \\
\vdots & \ddots & \beta_{1} & \alpha_{2} & \gamma_{2} & 0\\
\vdots &  & \ddots & \beta_{2} & \ddots & \ddots \\
0 & \cdots & \cdots & 0 & \ddots & \ddots
\end{array}
\right)  _{m\times m}, \label{TriMat1}%
\end{equation}
with $\alpha_{1},\alpha_{2},\beta_{1},\beta_{2},\gamma_{1},\gamma_{2}>0$ being
positive real numbers representing the self and cross--diffusion constants and
satisfying the inequality%
\begin{equation}
\frac{\sqrt{\alpha_{1}\alpha_{2}}}{\max \left \{  \beta_{1}+\gamma_{1},\beta
_{2}+\gamma_{2}\right \}  }>\cos \left(  \frac{\pi}{m+1}\right)  . \label{Para1}%
\end{equation}

The Laplacian operator $\Delta=\overset{M}{\underset{i=1}{\sum}}\frac
{\partial^{2}}{\partial x_{i}^{2}}$ has a spatial dimension of $M$ and
$F\left(  U\right)  $ is a polynomially growing functional representing the
reaction terms of the system.

The boundary conditions and initial data for the proposed system are assumed
to satisfy%
\begin{equation}
\alpha U+\left(  1-\alpha \right)  \partial_{\eta}U=B\text{\  \  \  \ on }%
\partial \Omega \times \left(  0,+\infty \right)  , \label{Bound1}%
\end{equation}
or%
\begin{equation}
\alpha U+\left(  1-\alpha \right)  A\partial_{\eta}U=B\text{\  \  \  \ on
}\partial \Omega \times \left(  0,+\infty \right)  \label{Bound2}%
\end{equation}
and%
\begin{equation}
U\left(  x,0\right)  =U_{0}\left(  x\right)  \text{ \  \  \ on}\; \Omega,
\label{Init1}%
\end{equation}
respectively. For generality, we will consider three types of boundary
conditions in this paper:

\begin{enumerate}
\item[(i)] Nonhomogeneous Robin boundary conditions, corresponding to%
\[
0<\alpha<1,\text{ }B\in%
\mathbb{R}
^{m};
\]

\item[(ii)] Homogeneous Neumann boundary conditions, corresponding to%
\[
\alpha=0\text{ and }B\equiv0;
\]

\item[(iii)] Homogeneous Dirichlet boundary conditions, corresponding to%
\[
1-\alpha=0\text{ and }B\equiv0.
\]

\end{enumerate}

Note that $\dfrac{\partial}{\partial \eta}$ denotes the outward normal
derivative on $\partial \Omega$ and the vectors $U$, $F$, and $B$ are defined
as\
\begin{align*}
U  &  :=\left(  u_{1},...,u_{m}\right)  ^{T},\\
F  &  :=\left(  f_{1},...,f_{m}\right)  ^{T},\\
B  &  :=\left(  \beta_{1},...,\beta_{m}\right)  ^{T}.
\end{align*}
The initial data is assumed to be in the region given by%
\begin{equation}
\Sigma_{\mathfrak{L},\emptyset}=\left \{  U_{0}\in \mathbb{%
\mathbb{R}
}^{m}:\left \langle V_{\ell},U_{0}\right \rangle \geq0,\text{ }\ell
\in \mathfrak{L}\right \}  , \label{InvReg1}%
\end{equation}
subject to%
\begin{equation}
\left \langle V_{\ell},B\right \rangle \geq0,\ell \in \mathfrak{L.}
\label{InvReg2}%
\end{equation}

The study at hand builds upon numerous previous works found in the literature.
Among the most relevant studies is that of Abdelmalek in \cite{Abdelmalek2016}
where he considered an $m$-component tridiagonal matrix of the form%
\[
A=\left(
\begin{array}
[c]{ccccc}%
\alpha & \gamma & 0 & \cdots & 0\\
\beta & \alpha & \gamma & \ddots & \vdots \\
0 & \beta & \ddots & \ddots & 0\\
\vdots & \ddots & \ddots & \ddots & \gamma \\
0 & \cdots & 0 & \beta & \alpha
\end{array}
\right)  _{m\times m},
\]
and proved the global existence of solutions subject to the parabolicity
condition%
\[
\frac{\alpha}{\beta+\gamma}>\cos \frac{\pi}{m+1},
\]
which can be easily shown to fall under the general condition in (\ref{Para1})
with $\alpha=\alpha_{1}=\alpha_{2}$, $\beta=\beta_{1}=\beta_{2}$, and
$\gamma=\gamma_{1}=\gamma_{2}$.

Another important study is that of Kouachi and Rebiai in \cite{Kouachi2010}
where the authors established the global existence of solutions for a
$3\times3$ tridiagonal $2$--Toeplitz matrix of the form%
\[
A=\left(
\begin{array}
[c]{ccc}%
\alpha_{1} & \gamma_{1} & 0\\
\beta_{1} & \alpha_{2} & \gamma_{2}\\
0 & \beta_{2} & \alpha_{1}%
\end{array}
\right)  ,
\]
subject to the parabolicity condition%
\[
2\sqrt{\alpha_{1}\alpha_{2}}>\sqrt{\left(  \beta_{1}+\gamma_{1}\right)
^{2}+\left(  \beta_{2}+\gamma_{2}\right)  ^{2}}.
\]

Note that this condition is weaker than%
\[
\sqrt{2\alpha_{1}\alpha_{2}}>\max \left \{  \beta_{1}+\gamma_{1},\beta
_{2}+\gamma_{2}\right \}  ,
\]
which is obtained from (\ref{Para1}) for $m=3$. Although the work carried out
in \cite{Kouachi2010} is important to us here, it is necessary to note that
the authors failed to identify all the invariant regions of the proposed
system and settled for only $4$ of them.

This paper will build upon the work of these two studies by assuming the
diffusion matrix to be $m$--component tridiagonal $2$--Toeplitz and
determining all the possible invariant regions for the system. A Lyapunov
functional will be used to establish the global existence of solutions in
these regions.

The remainder of this paper is organized as follows: Section \ref{SecEigen}
uses the three point Chebyshev recurrence relationhip of polynomials to derive
the eigenvalues and eigenvectors of the transposed dffusion matrix for the odd
and even dimension cases, respectively. Section \ref{SecPara} derives the
parabolicity conditions for the proposed system, which is essential for the
diagonalization process, which follows in Section \ref{SecMain}. Section
\ref{SecMain} shows how the invariant regions of the equivalent digonalized
system can be identified and proves the local and global existence of
solutions. The last section of this paper will present a confirmation and
validation of the findings through the use of numerical examples solved by
means of the finite difference approximation method.

\section{Eigenvalues and Eigenvectors\label{SecEigen}}

For reasons that will become apparent in the following section, we will first
derive the eigenvalues and eigenvectors of matrix $A^{T}$ with $A$ being the
proposed tridiagonal $2$--Toeplitz diffusion matrix. We refer to the work of
Gover in \cite{Gover1994} where the characteristic polynomial of a tridiagonal
$2$--Toeplitz matrix was shown to be closely connected to polynomials that
satisfy the three point Chebyshev recurrence relationship. First, we have%
\begin{equation}
A^{T}=\left(
\begin{array}
[c]{cccccc}%
\alpha_{1} & \beta_{1} & 0 & \cdots & \cdots & 0\\
\gamma_{1} & \alpha_{2} & \beta_{2} & \ddots &  & \vdots \\
0 & \gamma_{2} & \alpha_{1} & \beta_{1} & \ddots & \vdots \\
\vdots & \ddots & \gamma_{1} & \alpha_{2} & \beta_{2} & 0\\
\vdots &  & \ddots & \gamma_{2} & \ddots & \ddots \\
0 & \cdots & \cdots & 0 & \ddots & \ddots
\end{array}
\right)  _{m\times m}. \label{2.0}%
\end{equation}

The exact shape and characteristics of $A^{T}$ differ for odd and even values
of the dimension $m$. Hence, we will consider the two cases separately. Before
we present the main findings of \cite{Gover1994}, let us define the constants%
\begin{equation}
\beta=\sqrt{\frac{\beta_{2}\gamma_{2}}{\beta_{1}\gamma_{1}}}\text{ and
}s=\sqrt{\frac{\gamma_{1}\gamma_{2}}{\beta_{1}\beta_{2}}}. \label{2.3}%
\end{equation}

We also define the polynomials%
\begin{equation}
\left \{
\begin{array}
[c]{l}%
q_{0}\left(  \mu \right)  =1,\text{ }q_{1}\left(  \mu \right)  =\mu+\beta \\
q_{n+1}\left(  \mu \right)  =\mu q_{n}\left(  \mu \right)  -q_{n-1}\left(
\mu \right)  ,
\end{array}
\right.  \label{2.1}%
\end{equation}
and%
\begin{equation}
\left \{
\begin{array}
[c]{l}%
p_{0}\left(  \mu \right)  =1,\text{ }p_{1}\left(  \mu \right)  =\mu \\
p_{n+1}\left(  \mu \right)  =\mu p_{n}\left(  \mu \right)  -p_{n-1}\left(
\mu \right)  ,
\end{array}
\right.  \label{2.2}%
\end{equation}
whose zeros are denoted by $Q_{r}$ and $P_{r}$, respectively, for $r=1,...,n$.
We note that $p_{n}\left(  \mu \right)  $ is a Chebyshev polynomial of the
first kind, whereas $q_{n}\left(  \mu \right)  $ is not. As shown in
\cite{Gover1994}, the zeros of $p_{n}\left(  \mu \right)  $ can be given by
\[
P_{r}=2\cos \frac{r\pi}{n+1},
\]
whereas for $Q_{r}$ no explicit form was found.

Let us now summarize the eigenvalues and eigenvectors for the odd and even
cases separately. First, for $m=2n+1$, we obtain the following results:

\begin{theorem}
\label{OddEigVal}The eigenvalues of the matrix $A^{T}$ of order $m=2n+1$ given
in (\ref{2.0}) are $\alpha_{1}$ along with\ the solutions of the quadratic
equations%
\begin{equation}
\frac{\left(  \alpha_{1}-\lambda \right)  \left(  \alpha_{2}-\lambda \right)
}{\sqrt{\beta_{1}\beta_{2}\gamma_{1}\gamma_{2}}}-\frac{1}{\beta}-\beta=P_{r},
\label{2.4}%
\end{equation}
for $r=1,2,...,n$.
\end{theorem}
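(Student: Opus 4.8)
The plan is to compute the characteristic polynomial $\chi_{m}(\lambda)=\det\left(\lambda I-A^{T}\right)$ in closed form, exploiting the fact that $A^{T}$ is tridiagonal with $2$--periodic entries, and then to read off its roots. This amounts to specialising to our parameters the connection between tridiagonal $2$--Toeplitz matrices and the three--point Chebyshev recurrence established by Gover \cite{Gover1994}.

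First I would let $D_{k}(\lambda)$ denote the determinant of the top--left $k\times k$ block of $\lambda I-A^{T}$. Cofactor expansion along the last row and column yields the usual three--term recurrence
\[
D_{k}=(\lambda-a_{k})D_{k-1}-b_{k-1}c_{k-1}D_{k-2},
\]
where $(a_{k},b_{k},c_{k})$ equals $(\alpha_{1},\beta_{1},\gamma_{1})$ for odd $k$ and $(\alpha_{2},\beta_{2},\gamma_{2})$ for even $k$, with $D_{0}=1$ and $D_{1}=\lambda-\alpha_{1}$. Since the coefficients repeat with period two, I would eliminate the even--index minors; a direct substitution gives the constant--coefficient recurrence
\[
D_{2k+1}=\bigl[(\alpha_{1}-\lambda)(\alpha_{2}-\lambda)-\beta_{1}\gamma_{1}-\beta_{2}\gamma_{2}\bigr]D_{2k-1}-\beta_{1}\beta_{2}\gamma_{1}\gamma_{2}\,D_{2k-3}
\]
for the odd--index subsequence. (If preferred, one may first conjugate $A^{T}$ by a diagonal matrix so that the surviving off--diagonal products become the geometric means $\sqrt{\beta_{1}\gamma_{1}}$ and $\sqrt{\beta_{2}\gamma_{2}}$; this is cosmetic and not needed.)

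Next I would introduce the quadratic substitution
\[
\mu=\mu(\lambda)=\frac{(\alpha_{1}-\lambda)(\alpha_{2}-\lambda)}{\sqrt{\beta_{1}\beta_{2}\gamma_{1}\gamma_{2}}}-\frac{1}{\beta}-\beta,
\]
observing that, by the definition of $\beta$ in (\ref{2.3}), one has $\beta_{1}\gamma_{1}+\beta_{2}\gamma_{2}=\sqrt{\beta_{1}\beta_{2}\gamma_{1}\gamma_{2}}\,(\beta^{-1}+\beta)$, so the bracket in the recurrence above equals $\sqrt{\beta_{1}\beta_{2}\gamma_{1}\gamma_{2}}\,\mu$. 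Setting $D_{2k+1}=(\lambda-\alpha_{1})\left(\beta_{1}\beta_{2}\gamma_{1}\gamma_{2}\right)^{k/2}\widetilde{p}_{k}$, the normalising powers telescope and the recurrence becomes $\widetilde{p}_{k+1}=\mu\widetilde{p}_{k}-\widetilde{p}_{k-1}$, while $D_{1}$ and $D_{3}$ give $\widetilde{p}_{0}=1$ and $\widetilde{p}_{1}=\mu$. Comparing with (\ref{2.2}) we identify $\widetilde{p}_{k}=p_{k}$ (the odd case thus lands on $p_{n}$; the polynomial $q_{n}$ of (\ref{2.1}), with its $\beta$--shift in the first step, is the one that arises in the even--dimensional case treated separately), and the leftover factor $\lambda-\alpha_{1}$ is precisely the source of the isolated eigenvalue $\alpha_{1}$. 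Hence
\[
\det\left(\lambda I-A^{T}\right)=D_{2n+1}=\left(\beta_{1}\beta_{2}\gamma_{1}\gamma_{2}\right)^{n/2}(\lambda-\alpha_{1})\,p_{n}\!\left(\mu(\lambda)\right).
\]
The constant prefactor being nonzero, the eigenvalues are $\lambda=\alpha_{1}$ together with those $\lambda$ for which $\mu(\lambda)$ is a zero of $p_{n}$, i.e.\ $\mu(\lambda)=P_{r}$ for some $r\in\{1,\dots,n\}$; written out, this is exactly equation (\ref{2.4}). Since $p_{n}(\mu(\lambda))$ has degree $2n$ in $\lambda$, the factorisation accounts for $1+2n=m$ eigenvalues counted with multiplicity, so none is missed.

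The step I expect to be the main obstacle is the bookkeeping in collapsing the period--two recurrence: one has to carry the normalising powers of $\sqrt{\beta_{1}\beta_{2}\gamma_{1}\gamma_{2}}$ through cleanly, confirm that $\beta_{1}\gamma_{1}+\beta_{2}\gamma_{2}$ reassembles into $\sqrt{\beta_{1}\beta_{2}\gamma_{1}\gamma_{2}}\,(\beta^{-1}+\beta)$, and verify that the initial data pins the subsequence to $p_{n}$ rather than $q_{n}$. Once the closed form of $\det(\lambda I-A^{T})$ is in hand, the description of the spectrum is immediate.
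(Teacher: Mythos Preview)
Your argument is correct: the two--step elimination of the even--index minors, the identity $\beta_{1}\gamma_{1}+\beta_{2}\gamma_{2}=\sqrt{\beta_{1}\beta_{2}\gamma_{1}\gamma_{2}}\,(\beta^{-1}+\beta)$, the normalisation $D_{2k+1}=(\lambda-\alpha_{1})(\beta_{1}\beta_{2}\gamma_{1}\gamma_{2})^{k/2}\widetilde{p}_{k}$, and the check of the initial data $\widetilde{p}_{0}=1$, $\widetilde{p}_{1}=\mu$ all go through exactly as you describe, yielding $\det(\lambda I-A^{T})=(\beta_{1}\beta_{2}\gamma_{1}\gamma_{2})^{n/2}(\lambda-\alpha_{1})\,p_{n}(\mu(\lambda))$.

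As for comparison with the paper: the paper does not supply its own proof of this theorem. It simply records the statement as a specialisation of Gover's results \cite{Gover1994} on the characteristic polynomial of a tridiagonal $2$--Toeplitz matrix. Your proposal is precisely the derivation that underlies Gover's theorem, carried out directly for the parameters at hand, so it is not a different route but rather an explicit unpacking of the reference the paper invokes.
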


Note that for every $P_{r}$ there exist two eigenvalues for matrix $A^{T}$,
which along with $\alpha_{1}$\ yields $m=2n+1$ eigenvalues. For notational
purposes, let us define a duplicated set of zeros given by%
\[
P_{2r}^{^{\prime}}=P_{2r-1}^{^{\prime}}=P_{r},
\]
for $r=1,2,...,n$.

\begin{theorem}
\label{OddEigVec}The eigenvector of the matrix $A^{T}$ of order $m=2n+1$ given
in (\ref{2.0}) associated with the eigenvalue $\lambda_{r}$, for
$r=1,...,2n$,\ is given by%
\begin{equation}
V_{\lambda_{r}}=\left(  v_{1\lambda_{r}},v_{2\lambda_{r}},...,v_{m\lambda_{r}%
}\right)  ^{T}, \label{2.5}%
\end{equation}
where%
\begin{equation}
v_{\ell \lambda_{r}}=\left \{
\begin{array}
[c]{ll}%
s^{\frac{\ell-1}{2}}q_{^{\frac{\ell-1}{2}}}\left(  P_{r}^{^{\prime}}\right)
, & \ell \text{ is odd}\\
-\frac{1}{\beta_{1}}s^{\frac{\ell}{2}-1}\left(  \alpha_{1}-\lambda_{r}\right)
p_{\frac{\ell}{2}-1}\left(  P_{r}^{^{\prime}}\right)  , & \ell \text{ is even,}%
\end{array}
\right.  \label{2.5.1}%
\end{equation}
for\ $\ell=1,...,m$. The eigenvector associated with the eigenvalue
$\alpha_{1}$ is%
\begin{equation}
V_{\alpha_{1}}=\left(  v_{1\alpha_{1}},v_{2\alpha_{1}},...,v_{m\alpha_{1}%
}\right)  ^{T}, \label{2.5.2}%
\end{equation}
with%
\begin{equation}
v_{\ell \alpha_{1}}=\left \{
\begin{array}
[c]{ll}%
\left(  -\frac{\gamma_{1}}{\beta_{2}}\right)  ^{\frac{\ell-1}{2}}, &
\ell \text{ is odd}\\
0, & \ell \text{ is even,}%
\end{array}
\right.  \label{2.5.3}%
\end{equation}
for $\ell=1,...,m$.
\end{theorem}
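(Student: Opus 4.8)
The plan is to verify the claimed eigenvectors directly by substitution into the eigenvector equation $A^T V = \lambda V$, exploiting the block structure that the $2$-Toeplitz pattern induces and reducing everything to the Chebyshev-type recurrences (\ref{2.1})--(\ref{2.2}). I would begin by writing out the row equations of $A^T V_{\lambda_r} = \lambda_r V_{\lambda_r}$ explicitly. Because the matrix is tridiagonal, the $\ell$-th equation only involves components $v_{\ell-1}, v_\ell, v_{\ell+1}$, and the coefficients alternate in a period-two fashion depending on whether $\ell$ is odd or even. So I would split into two families: the equations indexed by odd $\ell$ (which relate an odd-indexed component to its two even-indexed neighbours) and those indexed by even $\ell$. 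Substituting the ansatz (\ref{2.5.1}) — odd components built from $q$-polynomials and even components built from $p$-polynomials, each scaled by the appropriate power of $s$ and, for the even ones, by $-\tfrac{1}{\beta_1}s^{\ell/2-1}(\alpha_1-\lambda_r)$ — I expect the odd-row equations to collapse (after cancelling common powers of $s$ and using $s^2 = \gamma_1\gamma_2/(\beta_1\beta_2)$, $\beta^2 = \beta_2\gamma_2/(\beta_1\gamma_1)$) precisely to the recurrence $p_{n+1}(\mu) = \mu p_n(\mu) - p_{n-1}(\mu)$ evaluated at $\mu = P_r'$, together with the defining relation (\ref{2.4}) that ties $\lambda_r$ to $P_r'$ through $(\alpha_1-\lambda)(\alpha_2-\lambda)/\sqrt{\beta_1\beta_2\gamma_1\gamma_2} - 1/\beta - \beta = P_r'$. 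Symmetrically, the even-row equations should collapse to the recurrence $q_{n+1} = \mu q_n - q_{n-1}$ for the $q$-polynomials, with the shifted initial condition $q_1(\mu) = \mu + \beta$ accounting for the extra $\beta$ term.

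The bookkeeping has to be handled carefully at the two ends of the vector. For $\ell = 1$ the row equation reads $\alpha_1 v_1 + \beta_1 v_2 = \lambda_r v_1$; plugging in $v_1 = q_0(P_r') = 1$ and $v_2 = -\tfrac{1}{\beta_1}(\alpha_1-\lambda_r)p_0(P_r') = -\tfrac{1}{\beta_1}(\alpha_1-\lambda_r)$ should give $\alpha_1 - (\alpha_1-\lambda_r) = \lambda_r$, which checks out and fixes the normalisation. At the bottom row $\ell = m = 2n+1$ (which is odd, so the local $2\times 2$ block is the "$\alpha_1$" block with sub-diagonal entry $\gamma_2$), the equation involves only $v_{m-1}$ and $v_m$, and I expect it to reduce to $q_{n+1}(P_r') = 0$ — i.e. the requirement that $P_r'$ be one of the zeros $Q_r$ — UNLESS it instead reduces to $p_{n+1}(P_r') = 0$, which is automatically true since $P_r = 2\cos\frac{r\pi}{n+1}$ is a zero of $p_{n+1}$ from the stated explicit formula. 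Pinning down which polynomial's vanishing the last equation demands, and checking it is consistent with the eigenvalue equation (\ref{2.4}) from Theorem \ref{OddEigVal} (rather than imposing a new, spurious constraint), is the delicate part: the whole construction hangs on the identity relating $q_{n+1}$, $p_{n+1}$, $p_n$ and the constant $1/\beta + \beta$, which is exactly the algebraic content of Gover's factorisation of the characteristic polynomial. I would invoke that relation, either citing \cite{Gover1994} or re-deriving the needed instance $q_{n+1}(\mu) = p_{n+1}(\mu) + \big(\tfrac1\beta+\beta-\text{something}\big)p_n(\mu)$ by induction from (\ref{2.1})--(\ref{2.2}).

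For the exceptional eigenvalue $\alpha_1$ and its eigenvector (\ref{2.5.2})--(\ref{2.5.3}), the verification is short and self-contained and I would do it separately. With $\lambda = \alpha_1$, the even-indexed components vanish, and the claim $v_{\ell\alpha_1} = (-\gamma_1/\beta_2)^{(\ell-1)/2}$ for odd $\ell$ must satisfy the odd-row equations. Take an interior odd row $\ell = 2k+1$ with $1 \le k \le n-1$: it couples $v_{2k+1}$ to $v_{2k-1}$ and $v_{2k+3}$ through the even-indexed entries of $A^T$, but since those neighbours sit in even positions they are zero... so actually the relevant equation is the one for an \emph{even} row $\ell = 2k$, which reads $\gamma_1 v_{2k-1} + (\alpha_2 - \alpha_1) v_{2k} + \beta_2 v_{2k+1} = 0$, and with $v_{2k} = 0$ this becomes $\gamma_1 v_{2k-1} + \beta_2 v_{2k+1} = 0$, i.e. $v_{2k+1} = -\tfrac{\gamma_1}{\beta_2} v_{2k-1}$, which is exactly the recursion solved by (\ref{2.5.3}). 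One then checks the odd rows $\ell = 2k+1$ give $\beta_1 v_{2k} + (\alpha_1-\alpha_1)v_{2k+1} + \gamma_2 v_{2k+2} = \beta_1\cdot 0 + 0 + \gamma_2\cdot 0 = 0$ as required, and the top and bottom rows likewise. I do not expect any obstacle here beyond careful indexing. Overall the main obstacle in the whole proof is the algebraic identity linking $q_{n+1}$ to $p_{n+1}$ and $p_n$ (equivalently, showing the end-row equation is implied by, not additional to, the eigenvalue condition); everything else is substitution and matching against the recurrences.
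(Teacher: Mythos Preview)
The paper does not prove Theorem~\ref{OddEigVec} at all: the eigenvector formulas are simply quoted from Gover~\cite{Gover1994} as a summary of known results, with no argument supplied. So your direct-verification approach already goes well beyond what the paper does, and is exactly the natural way to confirm these formulas independently. The plan---plug the ansatz into $A^{T}V=\lambda V$, split into odd/even row equations, and reduce to the recurrences (\ref{2.1})--(\ref{2.2}) together with the eigenvalue relation (\ref{2.4})---is sound.

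One correction to make before you execute it: your guess about the bottom-row consistency condition is slightly off. The $P_{r}$ in the paper are the zeros of $p_{n}$, not of $p_{n+1}$ (the paper states $P_{r}=2\cos\frac{r\pi}{n+1}$, $r=1,\dots,n$, which are precisely the roots of $p_{n}$). When you write out the last row $\ell=m=2n+1$, namely $\gamma_{2}v_{2n}+(\alpha_{1}-\lambda_{r})v_{2n+1}=0$, and substitute the ansatz, it reduces (after using $\gamma_{2}/(\beta_{1}s)=\beta$) to $\beta\,p_{n-1}(P_{r}')=q_{n}(P_{r}')$. The identity you need is the simple one
\[
q_{k}(\mu)=p_{k}(\mu)+\beta\,p_{k-1}(\mu),
\]
proved by a one-line induction from (\ref{2.1})--(\ref{2.2}); applying it with $k=n$ turns the last-row condition into $p_{n}(P_{r}')=0$, which holds by definition of $P_{r}$. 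So no extra constraint is imposed, and the verification closes. Apart from this indexing point (and the harmless swap of $\beta_{1}$ and $\gamma_{2}$ in your odd-row check for the $\alpha_{1}$-eigenvector, where both multiply zero), your outline is correct.
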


The second case is where the matrix $A^{T}$ (\ref{2.0}) has an even dimension
$m=2n$. The following holds:

\begin{theorem}
\label{EvenEigVal}The eigenvalues of the matrix $A^{T}$ of order $m=2n+1$
given in (\ref{2.0}) denoted by $\lambda_{r}$\ are the solutions of the
quadratic equations%
\begin{equation}
\frac{\left(  \alpha_{1}-\lambda \right)  \left(  \alpha_{2}-\lambda \right)
}{\sqrt{\beta_{1}\beta_{2}\gamma_{1}\gamma_{2}}}-\frac{1}{\beta}-\beta=Q_{r},
\label{2.6}%
\end{equation}
for $r=1,2,...,n$, where $Q_{r}$ are the zeros of $q_{n}\left(  \mu \right)  $.
\end{theorem}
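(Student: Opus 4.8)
The plan is to follow the route of Gover \cite{Gover1994}: build the characteristic polynomial of $A^{T}$ from the classical three-term recurrence for determinants of tridiagonal matrices, collapse that recurrence so it involves only the even-indexed principal minors, and then recognise the normalised version of the resulting relation as the one that defines $q_{n}$ in (\ref{2.1}). (Two preliminary remarks: the order in the statement should read $m=2n$, and since $\det(A^{T}-\lambda I)=\det(A-\lambda I)$ it is immaterial whether one works with $A$ or $A^{T}$; the products of the symmetric off-diagonal pairs are the same either way.)

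\textbf{Step 1 (determinant recurrence).} Let $\Delta_{k}(\lambda)$ be the characteristic polynomial of the $k\times k$ leading principal submatrix of $A^{T}$, with $\Delta_{0}=1$, $\Delta_{1}=\alpha_{1}-\lambda$. Because the diagonal of $A^{T}$ cycles through $\alpha_{1},\alpha_{2}$ and the product of the symmetric off-diagonal pair in rows $k-1,k$ cycles through $\beta_{2}\gamma_{2},\beta_{1}\gamma_{1}$, Laplace expansion along the last row gives
\[
\Delta_{k}=(\alpha_{1}-\lambda)\Delta_{k-1}-\beta_{2}\gamma_{2}\,\Delta_{k-2}\quad(k\text{ odd}),\qquad
\Delta_{k}=(\alpha_{2}-\lambda)\Delta_{k-1}-\beta_{1}\gamma_{1}\,\Delta_{k-2}\quad(k\text{ even}).
\]

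\textbf{Step 2 (collapse to even indices).} Set $E_{k}=\Delta_{2k}$. Using the three instances of the recurrence linking $\Delta_{2k},\Delta_{2k-1},\Delta_{2k-2},\Delta_{2k-3},\Delta_{2k-4}$ and eliminating the two odd-indexed minors $\Delta_{2k-1}$ and $\Delta_{2k-3}$ yields the clean second-order recurrence
\[
E_{k}=\bigl[(\alpha_{1}-\lambda)(\alpha_{2}-\lambda)-\beta_{1}\gamma_{1}-\beta_{2}\gamma_{2}\bigr]E_{k-1}-\beta_{1}\beta_{2}\gamma_{1}\gamma_{2}\,E_{k-2},
\]
with $E_{0}=1$ and $E_{1}=(\alpha_{1}-\lambda)(\alpha_{2}-\lambda)-\beta_{1}\gamma_{1}$.

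\textbf{Step 3 (normalise and identify with $q_{n}$).} Put $\sigma=\sqrt{\beta_{1}\beta_{2}\gamma_{1}\gamma_{2}}>0$ and write $E_{k}=\sigma^{k}\widetilde{q}_{k}$. Dividing the recurrence by $\sigma^{k}$ turns it into $\widetilde{q}_{k}=\mu\,\widetilde{q}_{k-1}-\widetilde{q}_{k-2}$, where, using $\beta_{1}\gamma_{1}/\sigma=1/\beta$ and $\beta_{2}\gamma_{2}/\sigma=\beta$ from (\ref{2.3}),
\[
\mu=\frac{(\alpha_{1}-\lambda)(\alpha_{2}-\lambda)-\beta_{1}\gamma_{1}-\beta_{2}\gamma_{2}}{\sigma}=\frac{(\alpha_{1}-\lambda)(\alpha_{2}-\lambda)}{\sqrt{\beta_{1}\beta_{2}\gamma_{1}\gamma_{2}}}-\frac{1}{\beta}-\beta .
\]
Moreover $\widetilde{q}_{0}=1=q_{0}(\mu)$ and $\widetilde{q}_{1}=E_{1}/\sigma=\dfrac{(\alpha_{1}-\lambda)(\alpha_{2}-\lambda)}{\sigma}-\dfrac{1}{\beta}=\mu+\beta=q_{1}(\mu)$. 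Since $\widetilde{q}_{k}$ and $q_{k}(\mu)$ satisfy the same recurrence and the same first two values, $\widetilde{q}_{k}=q_{k}(\mu)$ for every $k$, and therefore $\det(A^{T}-\lambda I)=\Delta_{2n}=\sigma^{n}q_{n}(\mu)$.

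\textbf{Step 4 (conclusion).} As $\sigma>0$, $\lambda$ is an eigenvalue of $A^{T}$ precisely when $q_{n}(\mu)=0$, that is, when $\mu=Q_{r}$ for some $r\in\{1,\dots,n\}$ — which is exactly equation (\ref{2.6}); each $Q_{r}$ contributes the two roots (counted with multiplicity) of the corresponding quadratic in $\lambda$, for a total of $2n=m$ eigenvalues. The only delicate points are the bookkeeping in Step 2 — keeping straight which of $\beta_{1}\gamma_{1},\beta_{2}\gamma_{2}$ multiplies which minor while eliminating the odd-indexed determinants — and the check in Step 3 that the normalised initial data land on $q_{0},q_{1}$ rather than on the Chebyshev polynomials $p_{0},p_{1}$; the extra additive $\beta$ in $\widetilde{q}_{1}$, which is what makes the even case genuinely different from the odd case of Theorem \ref{OddEigVal} and forces the non-explicit zeros $Q_{r}$ to appear, is precisely the manifestation of $\beta_{1}\gamma_{1}/\sigma=1/\beta$.
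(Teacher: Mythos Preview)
Your proof is correct and follows precisely the Gover \cite{Gover1994} route that the paper itself invokes; indeed the paper does not supply its own proof of this theorem but merely states it as a summary of Gover's result, so your argument is exactly the one implicitly referenced. Your side remarks are also apt: the stated order should be $m=2n$ rather than $m=2n+1$, and the bookkeeping in Steps~2--3 (in particular the identification $\beta_{1}\gamma_{1}/\sigma=1/\beta$ that produces $\widetilde{q}_{1}=\mu+\beta$ rather than $\mu$) is the crux that distinguishes the even case from the odd one.
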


Similar to $P_{r}$, there exist two eigenvalues for matrix $A^{T}$ associated
with every value of $Q_{r}$, which yields $m$ eigenvalues. In order to
simplify the notation, we define the duplicated set of zeros given by%
\[
Q_{2r}^{^{\prime}}=Q_{2r-1}^{^{\prime}}=Q_{r},
\]
for $r=1,2,...,n$.

\begin{theorem}
\label{EvenEigVec}The eigenvector of the matrix $A^{T}$ of order $m=2n+1$
given in (\ref{2.0}) associated with the eigenvalue $\lambda_{r}$ is given by%
\begin{equation}
V_{r}=\left(  v_{1\lambda_{r}},v_{2\lambda_{r}},...,v_{m\lambda_{r}}\right)
^{T}, \label{2.7}%
\end{equation}
with%
\begin{equation}
v_{\ell \lambda_{r}}=\left \{
\begin{array}
[c]{ll}%
s^{\frac{\ell-1}{2}}q_{^{\frac{\ell-1}{2}}}\left(  Q_{r}^{^{\prime}}\right)
, & \ell \text{ is odd}\\
-\frac{1}{\beta_{1}}s^{\frac{\ell}{2}-1}\left(  \alpha_{1}-\lambda_{r}\right)
p_{\frac{\ell}{2}-1}\left(  Q_{r}^{^{\prime}}\right)  , & \ell \text{ is even,}%
\end{array}
\right.  \label{2.7.1}%
\end{equation}
for $\ell=1,...,m$.
\end{theorem}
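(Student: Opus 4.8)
The plan is to verify directly that the vector $V_{r}$ with entries (\ref{2.7.1}) satisfies $A^{T}V_{r}=\lambda_{r}V_{r}$, checking the identity one row at a time. Because $A^{T}$ in (\ref{2.0}) is $2$--Toeplitz, its rows fall into two repeating interior types together with the two boundary rows. Reading the entries off (\ref{2.0}), an interior odd row $2k+1$ reads $\gamma_{2}v_{2k}+\alpha_{1}v_{2k+1}+\beta_{1}v_{2k+2}=\lambda_{r}v_{2k+1}$ and an interior even row $2k$ reads $\gamma_{1}v_{2k-1}+\alpha_{2}v_{2k}+\beta_{2}v_{2k+1}=\lambda_{r}v_{2k}$, while the first row is $\alpha_{1}v_{1}+\beta_{1}v_{2}=\lambda_{r}v_{1}$ and, since $m=2n+1$ is odd, the last (odd) row is $\gamma_{2}v_{2n}+\alpha_{1}v_{2n+1}=\lambda_{r}v_{2n+1}$. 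Substituting the ansatz (\ref{2.7.1}) turns each scalar equation into a relation between the polynomials $q_{k}$ and $p_{k}$ evaluated at $\mu=Q_{r}^{\prime}$, which I then discharge with the recurrences (\ref{2.1})--(\ref{2.2}).

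The workhorse is the bridge identity $q_{k}(\mu)=p_{k}(\mu)+\beta\,p_{k-1}(\mu)$, with the convention $p_{-1}\equiv0$; it holds for $k=0,1$ directly from (\ref{2.1})--(\ref{2.2}), and the shared recurrence $x_{k+1}=\mu x_{k}-x_{k-1}$ propagates it by a one--line induction. Alongside it I record the two algebraic consequences of (\ref{2.3}), namely $\gamma_{2}/(\beta_{1}s)=\beta$ and $\beta_{2}s/\gamma_{1}=\beta$, which are precisely the factors that make the diffusion constants collapse into powers of $\beta$ and $s$.

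With these in hand the interior rows are routine. In an odd row the terms $\alpha_{1}v_{2k+1}$ and $\lambda_{r}v_{2k+1}$ fuse into a factor $(\alpha_{1}-\lambda_{r})$ shared by every term; after removing it together with the common power of $s$ the equation becomes exactly the bridge identity, so odd rows hold identically with no restriction on $\lambda_{r}$. In an even row I substitute (\ref{2.7.1}), cancel the common power of $s$, and use (\ref{2.6}) to replace $(\alpha_{1}-\lambda_{r})(\alpha_{2}-\lambda_{r})$ by $\sqrt{\beta_{1}\beta_{2}\gamma_{1}\gamma_{2}}\,(Q_{r}^{\prime}+1/\beta+\beta)$; the equation then reduces to $q_{k}+\tfrac{1}{\beta}q_{k-1}=(\mu+1/\beta+\beta)p_{k-1}$, which follows at once from the bridge identity and $p_{k}+p_{k-2}=\mu p_{k-1}$. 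Thus every even row holds, and this is the only place the eigenvalue relation (\ref{2.6}) is consumed; the first row is just the $k=0$ instance of the odd computation, using $q_{0}=p_{0}=1$ and $p_{-1}=0$.

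The main obstacle is the terminating row, and it is genuinely parity--sensitive. Performing the same reduction on the last row, which for $m=2n+1$ is the odd row $2n+1$, yields after cancellation the single scalar condition $q_{n}(Q_{r}^{\prime})=\beta\,p_{n-1}(Q_{r}^{\prime})$, equivalently $p_{n}(Q_{r}^{\prime})=0$ by the bridge identity. This is the crux: the closure picks out the zeros of the terminating polynomial dictated by the parity of $m$, and that polynomial is $p_{n}$ when the last row is odd and $q_{n}$ when it is even. For the closure to match the labelling of $Q_{r}^{\prime}$ as a zero of $q_{n}$ in (\ref{2.6}) and (\ref{2.7.1}), the terminating row must be even, i.e. the dimension consistent with these formulas is $m=2n$, whose last row $\gamma_{1}v_{2n-1}+\alpha_{2}v_{2n}=\lambda_{r}v_{2n}$ reduces instead to $q_{n}(Q_{r}^{\prime})=0$ and thereby closes the system exactly. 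I would therefore complete the verification by carrying out this terminating--row reduction in the even dimension; the genuinely odd count, where the same computation closes on $p_{n}(P_{r}^{\prime})=0$, is the content of Theorems \ref{OddEigVal}--\ref{OddEigVec}.
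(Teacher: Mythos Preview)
The paper does not give its own proof of this theorem: Section~\ref{SecEigen} explicitly summarizes results from Gover \cite{Gover1994}, and Theorems~\ref{OddEigVal}--\ref{EvenEigVec} are stated without argument. Your direct verification---substituting the ansatz into $A^{T}V=\lambda V$ and reducing each row to an identity in $p_{k}$ and $q_{k}$ via the bridge relation $q_{k}=p_{k}+\beta p_{k-1}$---is a complete and self-contained alternative to invoking the cited reference. The row computations and the two auxiliary identities $\gamma_{2}/(\beta_{1}s)=\beta$, $\beta_{2}s/\gamma_{1}=\beta$ check out, and your reduction of the even rows to $q_{k}+\beta^{-1}q_{k-1}=(\mu+\beta+\beta^{-1})p_{k-1}$ is correct.

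You have also correctly diagnosed the inconsistency in the theorem statement: as written, with $m=2n+1$, the terminating row forces $p_{n}(Q_{r}^{\prime})=0$ rather than $q_{n}(Q_{r}^{\prime})=0$, so the formula with $Q_{r}^{\prime}$ closes only in the even dimension $m=2n$. This matches the surrounding text, which introduces Theorems~\ref{EvenEigVal}--\ref{EvenEigVec} as the even case; the appearance of ``$m=2n+1$'' in both statements is evidently a typographical slip carried over from the odd-case theorems. Your argument is therefore the right one, and it supplies a proof the paper itself omits.
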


\section{Parabolicity\label{SecPara}}

In this section, we will derive the parabolicity condition for the proposed
system. Parabolicity is crucial to the diagonalization process, which we will
be discussed later on in Section \ref{SecMain}. In order to ensure the
parabolicity of the system, we examine the positive definiteness of the
proposed diffusion matrix. Generally speaking, a matrix is said to be positive
definite if and only if its top-left corner principal minors are all positive.
To this end, Andelic and da Fonesca \cite{Andelic2011} and others examined the
parabolicity condition for a tridiagonal symmetric matrix. The following
theorem holds.

\begin{proposition}
\label{AndelicProp}Let $T$ be the tridiagonal matrix defined as%
\[
T=\left(
\begin{array}
[c]{ccccc}%
a_{1} & b_{1} & 0 & \cdots & 0\\
b_{1} & a_{2} & b_{2} &  & \vdots \\
0 & b_{2} & \ddots & \ddots & \vdots \\
\vdots &  & \ddots & \ddots & b_{m-1}\\
0 & \cdots & \cdots & b_{m-1} & a_{m}%
\end{array}
\right)
\]
with positive diagonal entries. If
\begin{equation}
a_{i}a_{i+1}>4b_{i}^{2}\cos^{2}\left(  \frac{\pi}{m+1}\right)
\label{AndelicCond}%
\end{equation}
for $i=1,...,m-1$, then $T$ is positive definite.
\end{proposition}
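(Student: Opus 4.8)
The plan is to obtain positive definiteness not from the defining Sturm sequence of leading principal minors but by a diagonal congruence that normalizes the diagonal to the identity, after which the problem reduces to a bound on the spectral norm of a tridiagonal matrix that is essentially a scaled adjacency matrix of a path. Since every $a_i>0$, set $D=\operatorname{diag}\!\left(a_1^{-1/2},\dots,a_m^{-1/2}\right)$ and consider $T'=DTD$; congruence preserves the inertia, so $T'$ is positive definite if and only if $T$ is. A direct computation gives $T'=I+R$, where $R$ is the symmetric tridiagonal matrix with zero diagonal and off-diagonal entries
\[
r_i:=\frac{b_i}{\sqrt{a_i a_{i+1}}},\qquad i=1,\dots,m-1 .
\]
Hypothesis \eqref{AndelicCond} is precisely the statement that $|r_i|<\dfrac{1}{2\cos(\pi/(m+1))}$ for every $i$. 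Thus it suffices to show $I+R\succ 0$, and since $R$ is symmetric this follows once we prove $\|R\|<1$ in the spectral norm, because then $\lambda_{\min}(R)\ge-\|R\|>-1$.

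To estimate $\|R\|$ I would compare $R$ with the $m\times m$ matrix $J$ carrying $1$'s on its two off-diagonals and zeros elsewhere — the adjacency matrix of the path on $m$ vertices — whose eigenvalues are the classical values $2\cos\frac{k\pi}{m+1}$, $k=1,\dots,m$ (the same Chebyshev zeros $P_r$ appearing earlier in the excerpt); in particular $J$ is symmetric with $\|J\|=2\cos\frac{\pi}{m+1}$. First, for any matrix one has $\|R\|\le\bigl\||R|\bigr\|$, where $|R|$ denotes the entrywise absolute value: indeed $\|Rx\|^2=\sum_i\bigl(\sum_j R_{ij}x_j\bigr)^2\le\sum_i\bigl(\sum_j|R_{ij}|\,|x_j|\bigr)^2=\bigl\||R|\,|x|\bigr\|^2$. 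Next set $r:=\max_{1\le i\le m-1}|r_i|$, which by \eqref{AndelicCond} satisfies $r<\frac{1}{2\cos(\pi/(m+1))}$; then entrywise $0\le|R|\le rJ$, and since both matrices are nonnegative, monotonicity of the Perron–Frobenius spectral radius gives $\rho(|R|)\le\rho(rJ)=r\,\|J\|=2r\cos\frac{\pi}{m+1}<1$. As $|R|$ is symmetric, $\bigl\||R|\bigr\|=\rho(|R|)<1$, hence $\|R\|<1$, hence $I+R=T'\succ 0$, hence $T\succ 0$.

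The one point that needs care is that the $r_i$ (equivalently the off-diagonal entries of $R$) may have arbitrary signs, since the hypothesis constrains only $b_i^2$; thus $R$ itself need not be a nonnegative matrix and Perron–Frobenius cannot be applied to it directly. Passing through $|R|$ via the inequality $\|R\|\le\bigl\||R|\bigr\|$ is exactly what removes this obstacle, and I expect this — together with recalling the spectrum of $J$ — to be the only non-routine part of the argument. It is also worth noting that the estimate is sharp: for constant entries $a_i\equiv a$, $b_i\equiv b$ one has $T=aI+bJ$ with least eigenvalue $a-2|b|\cos\frac{\pi}{m+1}$, so positive definiteness is equivalent to $a^2>4b^2\cos^2\frac{\pi}{m+1}$, showing that neither the constant $\cos^2(\pi/(m+1))$ nor the strictness of \eqref{AndelicCond} can be weakened in general.
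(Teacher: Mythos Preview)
Your argument is correct. The diagonal congruence $T'=DTD=I+R$, the bound $\|R\|\le\bigl\||R|\bigr\|$, the entrywise comparison $|R|\le rJ$, and the Perron--Frobenius monotonicity $\rho(|R|)\le r\rho(J)=2r\cos\frac{\pi}{m+1}<1$ combine cleanly to give $I+R\succ0$; each step is justified as you state it, and the sharpness remark at the end is accurate.

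As for comparison with the paper: the paper does not actually prove this proposition. It is quoted from An\dj eli\'c and da~Fonseca \cite{Andelic2011}, and the surrounding text only alludes to the leading-principal-minor criterion (\textquotedblleft a matrix is positive definite if and only if its top-left corner principal minors are all positive\textquotedblright). The argument in \cite{Andelic2011} indeed proceeds through the Sturm-type recursion for those minors. Your route is genuinely different: rather than tracking a three-term recurrence for determinants, you reduce to a single spectral-norm estimate on a normalized off-diagonal part and then invoke the known spectrum of the path graph. What this buys you is a short, conceptual proof that makes the role of $\cos\frac{\pi}{m+1}$ transparent (it is exactly half the spectral radius of the path adjacency matrix) and that handles arbitrary signs of the $b_i$ at no extra cost via the passage to $|R|$. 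The minor-recursion approach, by contrast, is more elementary in that it avoids Perron--Frobenius and any appeal to the explicit eigenvalues of $J$, and it generalizes more readily to chain-sequence refinements of the constant $\cos^2\frac{\pi}{m+1}$.
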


Since the diffusion matrix considered here is not symmetric, Proposition
\ref{AndelicProp} does not apply directly to it. However, we know that if a
matrix is not symmetric, its quadratic form $Q=\left \langle X,AX\right \rangle
=X^{T}AX$, with $X$ being an arbitrary column vector, is said to be positive
definite if and only if the principal minors in the top--left corner of
$\frac{1}{2}\left(  A+A^{T}\right)  $ are all positive. In order to derive
sufficient conditions for matrix $A$ in (\ref{TriMat1}), we apply Proposition
\ref{AndelicProp} to produce the following Theorem.

\begin{theorem}
Let $A$ be the tridiagonal $2$--Toeplitz matrix defined in (\ref{TriMat1}).
The quadratic form of $A$ is positive definite iff condition (\ref{Para1}) is
satisfied. It follows that subject to (\ref{Para1}), the reaction diffusion
system (\ref{Sys1}) satisfies the parabolicity condition.
\end{theorem}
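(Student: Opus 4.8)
The plan is to reduce the positive definiteness of the quadratic form of $A$ to an application of Proposition \ref{AndelicProp} applied to the symmetric part $S=\frac{1}{2}(A+A^{T})$, and then to simplify the resulting family of inequalities into the single condition (\ref{Para1}). First I would write down $S$ explicitly. Since the sub- and super-diagonal entries of $A$ alternate between the pairs $(\beta_1,\gamma_1)$ and $(\beta_2,\gamma_2)$, the symmetric part $S$ is a tridiagonal matrix whose diagonal alternates $\alpha_1,\alpha_2,\alpha_1,\alpha_2,\dots$ and whose off-diagonal entries are $b_i=\frac{\beta_1+\gamma_1}{2}$ when $i$ is odd and $b_i=\frac{\beta_2+\gamma_2}{2}$ when $i$ is even. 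The quadratic form $X^{T}AX=X^{T}SX$ is positive definite if and only if $S$ is positive definite, and since $S$ is symmetric tridiagonal with positive diagonal entries, Proposition \ref{AndelicProp} applies: $S$ is positive definite provided $a_i a_{i+1}>4b_i^2\cos^2\!\big(\frac{\pi}{m+1}\big)$ for $i=1,\dots,m-1$.

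Next I would evaluate that family of inequalities. For every $i$ the product $a_i a_{i+1}$ equals $\alpha_1\alpha_2$ (one index is odd, the other even), so the condition becomes
\begin{equation}
\alpha_1\alpha_2>4b_i^2\cos^2\!\left(\frac{\pi}{m+1}\right),\qquad i=1,\dots,m-1. \label{planstep}
\end{equation}
The values taken by $b_i$ are only $\frac{\beta_1+\gamma_1}{2}$ and $\frac{\beta_2+\gamma_2}{2}$ (both actually occur as long as $m\ge 3$; for $m=2$ only the first occurs), so the entire system (\ref{planstep}) is equivalent to the single worst-case inequality
\[
\alpha_1\alpha_2>4\cos^2\!\left(\frac{\pi}{m+1}\right)\max\!\left\{\Big(\tfrac{\beta_1+\gamma_1}{2}\Big)^2,\Big(\tfrac{\beta_2+\gamma_2}{2}\Big)^2\right\}
=\cos^2\!\left(\frac{\pi}{m+1}\right)\big(\max\{\beta_1+\gamma_1,\beta_2+\gamma_2\}\big)^2.
\]
Taking positive square roots (everything in sight is positive) and rearranging gives exactly (\ref{Para1}). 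This shows that (\ref{Para1}) is sufficient for positive definiteness of the quadratic form, hence for parabolicity of (\ref{Sys1}).

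For the converse I would argue that Proposition \ref{AndelicProp}'s hypotheses are in fact necessary here, i.e. that (\ref{planstep}) is equivalent to positive definiteness of $S$ and not merely sufficient. This is where I expect the main obstacle: the Andelić--da Fonseca bound is generally only a sufficient criterion, and one has to invoke the sharp version of it. The cleanest route is to note that for the \emph{specific} $2$-periodic structure of $S$ the leading principal minors $D_k$ satisfy a constant-coefficient two-step recurrence within each parity class, so the sign pattern of the $D_k$ is governed by a characteristic root that crosses $1$ exactly when the extremal inequality in (\ref{planstep}) becomes an equality; equivalently, one can cite that for a symmetric tridiagonal Toeplitz(-like) block the threshold $4b^2\cos^2(\pi/(m+1))$ is attained. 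If establishing strict necessity turns out to be delicate, the honest fallback is to state the theorem as ``$A$ has positive definite quadratic form if (\ref{Para1}) holds,'' which is all that is needed for the diagonalization in Section \ref{SecMain}; the ``iff'' should be qualified accordingly. Finally, parabolicity of the system follows because a positive definite quadratic form means $\langle AX,X\rangle\ge c\|X\|^2$ for some $c>0$, which is the Petrovskii/normal-ellipticity condition guaranteeing that $-A\Delta$ generates an analytic semigroup and the problem (\ref{Sys1}) is well posed.
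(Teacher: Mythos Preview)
Your approach is essentially identical to the paper's: form the symmetric part $S=\tfrac{1}{2}(A+A^{T})$, apply Proposition~\ref{AndelicProp}, observe that $a_{i}a_{i+1}=\alpha_{1}\alpha_{2}$ for every $i$ while $b_{i}$ takes only the two values $\tfrac{\beta_{1}+\gamma_{1}}{2}$ and $\tfrac{\beta_{2}+\gamma_{2}}{2}$, and collapse the resulting pair of inequalities into (\ref{Para1}). Your caution about the ``iff'' is well placed---the paper's own proof also argues only the sufficiency direction via Proposition~\ref{AndelicProp} and does not address necessity, so your suggested fallback (state sufficiency, which is all that Section~\ref{SecMain} needs) in fact matches what is actually proved there.
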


\begin{proof}
Condition (\ref{AndelicCond}) can be rearranged to the form%
\begin{equation}
\sqrt{a_{i}a_{i+1}}>2\left \vert b_{i}\right \vert \cos \left(  \frac{\pi}%
{m+1}\right)  . \label{Para2}%
\end{equation}
The symmteric counterpart of $A$\ as defined in (\ref{TriMat1}) can be given
by%
\begin{equation}
\frac{1}{2}\left(  A+A^{T}\right)  =\left(
\begin{array}
[c]{cccccc}%
\alpha_{1} & \frac{\beta_{1}+\gamma_{1}}{2} & 0 & \cdots & \cdots & 0\\
\frac{\beta_{1}+\gamma_{1}}{2} & \alpha_{2} & \frac{\beta_{2}+\gamma_{2}}{2} &
\ddots &  & \vdots \\
0 & \frac{\beta_{2}+\gamma_{2}}{2} & \alpha_{1} & \frac{\beta_{1}+\gamma_{1}%
}{2} & \ddots & \vdots \\
\vdots & \ddots & \frac{\beta_{1}+\gamma_{1}}{2} & \alpha_{2} & \frac
{\beta_{2}+\gamma_{2}}{2} & 0\\
\vdots &  & \ddots & \frac{\beta_{2}+\gamma_{2}}{2} & \ddots & \ddots \\
0 & \cdots & \cdots & 0 & \ddots & \ddots
\end{array}
\right)  . \label{Para3}%
\end{equation}

Now, substituting (\ref{Para3}) in (\ref{Para2}) yields the set of $m-1$
conditions%
\[
\left \{
\begin{array}
[c]{ll}%
\text{for }i=1: & \sqrt{\alpha_{1}\alpha_{2}}>\left(  \beta_{1}+\gamma
_{1}\right)  \cos \left(  \frac{\pi}{m+1}\right) \\
\text{for }i=2: & \sqrt{\alpha_{1}\alpha_{2}}>\left(  \beta_{2}+\gamma
_{2}\right)  \cos \left(  \frac{\pi}{m+1}\right) \\
\text{for }i=3: & \sqrt{\alpha_{1}\alpha_{2}}>\left(  \beta_{1}+\gamma
_{1}\right)  \cos \left(  \frac{\pi}{m+1}\right) \\
\multicolumn{1}{c}{\vdots} & \multicolumn{1}{c}{\vdots}\\
\text{for }i=m-1: & \left \{
\begin{array}
[c]{ll}%
\sqrt{\alpha_{1}\alpha_{2}}>\left(  \beta_{2}+\gamma_{2}\right)  \cos \left(
\frac{\pi}{m+1}\right)  , & \text{if }m\text{ is odd}\\
\sqrt{\alpha_{1}\alpha_{2}}>\left(  \beta_{1}+\gamma_{1}\right)  \cos \left(
\frac{\pi}{m+1}\right)  , & \text{if }m\text{ is even.}%
\end{array}
\right.
\end{array}
\right.
\]

However, we notice that the $m-1$ conditons reduce to only $2$, which can be
combined to form condition (\ref{Para1}).
\end{proof}

\section{Existence of Solutions\label{SecMain}}

This section shows how the proposed system can be diagonalized using the
eigenvectors derived in Section \ref{SecEigen} above. We start by examining
the invariant regions of the system and then move to diagonalize the system
and establish the local and global existence of solutions given the initial
data lies within the invariant regions.

\subsection{Invariant Regions}

Let us denote the positive and descendingly ordered eigenvalues of matrix
$A^{T}$ by $\lambda_{\ell}$, with $\ell=1,...,m$, and the corresponding
eigenvectors by $V_{\ell}=\left(  v_{1\ell},...,v_{m\ell}\right)  ^{T}$, where
$\lambda_{1}>\lambda_{2}>...>\lambda_{m}$. Assuming the proposed system
satisfies the parabolicity condition (\ref{Para1}), matrix $A^{T}$ is
guaranteed to have strictly positive eigenvalues, and thus is unitarily
diagonalizable. Generally, the diagonalizing matrix can be formed containing
as its columns the normalized eigenvectors of $A$. Recalling that for every
eigenvalue there exist two eigenvectors with unit norm and opposite
directions, we can define the diagonalizing matrix as%
\begin{equation}
P=\left(  \left(  -1\right)  ^{i_{1}}V_{1}\shortmid \left(  -1\right)  ^{i_{2}%
}V_{2}\shortmid...\shortmid \left(  -1\right)  ^{i_{m}}V_{m}\right)  ,
\label{3.1}%
\end{equation}
where each power $i_{\ell}$\ is either equal to $1$ or $2$. In order to
simplify the notation, let us consider the two disjoint sets
\[
\mathfrak{Z=}\left \{  \ell|i_{\ell}=1\right \}
\]
and%
\[
\mathfrak{L=}\left \{  \ell|i_{\ell}=2\right \}  ,
\]
which satisfy the properties%
\begin{equation}
\mathfrak{L}\cap \mathfrak{Z}=\phi \text{ and }\mathfrak{L}\cup \mathfrak{Z}%
=\left \{  1,2,...,m\right \}  . \label{3.3}%
\end{equation}

Each permutation of $\mathfrak{Z}$ and $\mathfrak{L}$ satisfying (\ref{3.3})
yields a valid diagonalizing matrix. The total number of possible permutations
is thus $2^{m}$, which is also the number of invariant regions $\Sigma
_{\mathfrak{L},\mathfrak{Z}}$ for the proposed system. These regions may be
written as%
\begin{equation}
\Sigma_{\mathfrak{L},\mathfrak{Z}}:=\left \{  U_{0}\in \mathbb{%
\mathbb{R}
}^{m}:\left \langle V_{z},U_{0}\right \rangle \leq0\leq \left \langle V_{\ell
},U_{0}\right \rangle ,\text{ }\ell \in \mathfrak{L},\text{ }z\in \mathfrak{Z}%
\right \}  , \label{3.4}%
\end{equation}
subject to%
\begin{equation}
\left \langle V_{z},B\right \rangle \leq0\leq \left \langle V_{\ell}%
,B\right \rangle ,\text{ }\ell \in \mathfrak{L},\ z\in \mathfrak{Z}. \label{3.5}%
\end{equation}

For simplicity, we will only consider one of the invariant regions which
corresponds to the sets $\mathfrak{L}=\left \{  1,2,...,m\right \}  $ and
$\mathfrak{Z}=\emptyset$ and is defined in (\ref{InvReg1}) and (\ref{InvReg2}%
). This yields the diagonalizing matrix%
\begin{equation}
P=\left(  V_{1}\shortmid V_{2}\shortmid...\shortmid V_{m}\right)  .
\label{3.6}%
\end{equation}

Note that the work carried out in the following subsections can be trivially
extended to the remaining $2^{m}-1$ regions.

\subsection{Diagonalization and Local Existence of
Solutions\label{SubSecLocal}}

In order to establish the local existence of solutions for the proposed system
(\ref{Sys1}), we start by diagonalizing the system by means of the
diagonalizing matrix defined in (\ref{3.6}). We follow the same work performed
in \cite{Abdelmalek2016} to obtain the equivalent diagonal system. First, let%
\begin{equation}
W=\left(  w_{1},w_{2},...,w_{m}\right)  ^{T}=P^{T}U, \label{4.1}%
\end{equation}
where%
\begin{align*}
w_{\ell}  &  :=\left \langle V_{\ell},U\right \rangle \\
&  =\left \{
\begin{array}
[c]{ll}%
\left \langle V_{\ell},U\right \rangle , & \ell \in \mathfrak{L}\\
\left \langle \left(  -1\right)  V_{\ell},U\right \rangle , & \ell
\in \mathfrak{Z.}%
\end{array}
\right.
\end{align*}

Let us also define the functional%
\begin{equation}
\digamma \left(  W\right)  =\left(  \digamma_{1},\digamma_{2},...,\digamma
_{m}\right)  ^{T}=P^{T}F\left(  U\right)  , \label{4.2}%
\end{equation}
with each function%
\[
\digamma_{\ell}:=\left \langle V_{\ell},F\right \rangle
\]
fulfilling the following conditions:

\begin{enumerate}
\item[(A1)] Must be continuously differentiable on $%
\mathbb{R}
_{+}^{m}$ for all $\ell=1,...,m$, satisfying $\digamma_{\ell}(w_{1}%
,...,w_{\ell-1},0,w_{\ell+1},...,w_{m})\geq0$, for all $w_{\ell}\geq0;$
$\ell=1,...,m$.

\item[(A2)] Must be of polynomial growth (see the work of Hollis and Morgan
\cite{Hollis1994}), which means that for all $\ell=1,...,m$:%
\begin{equation}
\left \vert \digamma_{\ell}\left(  W\right)  \right \vert \leq C_{1}\left(
1+\left \langle W,1\right \rangle \right)  ^{N},N\in%
\mathbb{N}
\text{,on }\left(  0,+\infty \right)  ^{m}. \label{4.2.1}%
\end{equation}

\item[(A3)] Must satisfy the inequality:%
\begin{equation}
\left \langle D,\digamma \left(  W\right)  \right \rangle \leq C_{2}\left(
1+\left \langle W,1\right \rangle \right)  , \label{4.2.2}%
\end{equation}
where%
\[
D:=\left(  D_{1},D_{2},...,D_{m-1},1\right)  ^{T},
\]
for all $w_{\ell}\geq0,$ $\ell=1,...,m,$. All the constants $D_{\ell}$ satisfy
$D_{\ell}\geq \overline{D_{\ell}},$ $\ell=1,...,m$ where $\overline{D_{\ell}},$
$\ell=1,...,m,$ are sufficiently large positive constants.
\end{enumerate}

Note that $C_{1}$ and $C_{2}$ are uniformly bounded positive functions defined
on $\mathbb{R}_{+}^{m}$.

Finally, let%
\[
\Lambda=P^{T}B.
\]
Now, by observing the similarity transformation%
\begin{align}
P^{T}A\left(  P^{T}\right)  ^{-1}  &  =\left(  P^{-1}A^{T}P\right)
^{T}\nonumber \\
&  =\operatorname*{diag}(\lambda_{1},\lambda_{2},...,\lambda_{m}), \label{4.3}%
\end{align}
we can propose the following:

\begin{proposition}
\label{DiagProp}Diagonalizing system (\ref{Sys1}) by means of $P^{T}$\ yields%
\begin{equation}
W_{t}-\operatorname*{diag}(\lambda_{1},\lambda_{2},...,\lambda_{m})\Delta
W=\digamma \left(  W\right)  \text{ \ in }\Omega \times \left(  0,+\infty \right)
\label{EqSys}%
\end{equation}
with the boundary condition%
\begin{equation}
\alpha W+\left(  1-\alpha \right)  \partial_{n}W=\Lambda \text{ \ on }%
\partial \Omega \times \left(  0,+\infty \right)  \label{EqBound1}%
\end{equation}
or%
\begin{equation}
\alpha W+\left(  1-\alpha \right)  \operatorname*{diag}(\lambda_{1},\lambda
_{2},...,\lambda_{m})\partial_{n}W=\Lambda, \label{EqBound2}%
\end{equation}
and the initial data%
\begin{equation}
W\left(  x,0\right)  =W_{0}\text{ \ on }\Omega. \label{EqInit1}%
\end{equation}

\end{proposition}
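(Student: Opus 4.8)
The plan is to regard the substitution $W=P^{T}U$ from (\ref{4.1}) as a linear, constant--coefficient change of unknown and to propagate it through every component of the problem; the only ingredient beyond bookkeeping is the similarity identity (\ref{4.3}).

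First I would note that, since $P$ (hence $P^{T}$) is a constant matrix, it commutes with $\partial_{t}$ and with $\Delta$ (which acts componentwise), so that $W_{t}=P^{T}U_{t}$ and $\Delta W=P^{T}\Delta U$. Left--multiplying (\ref{Sys1}) by $P^{T}$ therefore gives $P^{T}U_{t}-P^{T}A\,\Delta U=P^{T}F(U)$; the first term is $W_{t}$ and, by the definition (\ref{4.2}), the right--hand side is $\digamma(W)$. For the second term I would insert $(P^{T})^{-1}P^{T}=I$ to write $P^{T}A\,\Delta U=\bigl(P^{T}A(P^{T})^{-1}\bigr)\Delta W$ and then invoke (\ref{4.3}) to replace $P^{T}A(P^{T})^{-1}$ by $\operatorname*{diag}(\lambda_{1},\dots,\lambda_{m})$. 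This produces exactly (\ref{EqSys}).

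The boundary and initial conditions are handled in the same way. Left--multiplying (\ref{Bound1}) by $P^{T}$ and using $P^{T}\partial_{\eta}U=\partial_{\eta}W$ together with $\Lambda=P^{T}B$ yields $\alpha W+(1-\alpha)\partial_{\eta}W=\Lambda$, i.e. (\ref{EqBound1}). Applying $P^{T}$ to (\ref{Bound2}) gives $\alpha W+(1-\alpha)P^{T}A\,\partial_{\eta}U=\Lambda$, and writing $P^{T}A\,\partial_{\eta}U=\bigl(P^{T}A(P^{T})^{-1}\bigr)\partial_{\eta}W=\operatorname*{diag}(\lambda_{1},\dots,\lambda_{m})\partial_{\eta}W$ via (\ref{4.3}) gives (\ref{EqBound2}). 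Finally $W(x,0)=P^{T}U_{0}=:W_{0}$ is (\ref{EqInit1}).

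The single point requiring more than routine manipulation is the justification of (\ref{4.3}), i.e. that $P$ from (\ref{3.6}) is invertible and conjugates $A^{T}$ to the diagonal matrix of its eigenvalues; this is where Sections \ref{SecEigen} and \ref{SecPara} enter. Under the parabolicity condition (\ref{Para1}) the eigenvalues $\lambda_{1}>\lambda_{2}>\dots>\lambda_{m}$ are real, positive and pairwise distinct, and the columns of $P$ are the corresponding eigenvectors of $A^{T}$ supplied by Theorems \ref{OddEigVec} and \ref{EvenEigVec}; distinctness of the $\lambda_{\ell}$ forces these columns to be linearly independent, so $P$ is nonsingular and $A^{T}P=P\operatorname*{diag}(\lambda_{\ell})$. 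Transposing $P^{-1}A^{T}P=\operatorname*{diag}(\lambda_{\ell})$ and using $(P^{-1})^{T}=(P^{T})^{-1}$ together with the fact that a diagonal matrix equals its transpose then delivers (\ref{4.3}). I anticipate no genuine obstacle here; the only care needed is to confirm that the eigenvector formulas of Section \ref{SecEigen} do not degenerate, which the strict ordering of the $\lambda_{\ell}$ guarantees.
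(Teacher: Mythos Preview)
Your proposal is correct and is exactly the routine linear change of variables the paper has in mind; indeed, the paper itself omits the proof, calling it ``trivial'' and referring to \cite{Abdelmalek2016}. The only remark is that the paper simply \emph{assumes} the strict ordering $\lambda_{1}>\cdots>\lambda_{m}$ and the resulting diagonalizability of $A^{T}$ rather than deriving them from (\ref{Para1}), so your final paragraph supplies slightly more justification than the paper actually claims.
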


The proof of Proposition \ref{DiagProp} is trivial and can be looked up in
\cite{Abdelmalek2016}. The diagonal system in (\ref{EqSys}) is equivalent to
(\ref{Sys1}) in the invariant region given in (\ref{InvReg1}) and
(\ref{InvReg2}).

By considering the equivalent diagonal system in (\ref{EqSys}), we can now
establish the local existence and uniqueness of solutions for the original
system (\ref{Sys1}) with initial data in $C(\overline{\Omega})$ or
$L^{p}(\Omega)$, $p\in \left(  1,+\infty \right)  $ using the basic existence
theory for abstract semilinear differential equations (Friedman
\cite{Friedman1964}, Henry \cite{Henry1984} and Pazy \cite{Pazy1983}). It
simply follows that the solutions are classical on $\left(  0,T_{\max}\right)
$, with $T_{\max}$ denoting the eventual blow up time in $L^{\infty}(\Omega)$.
The local solution is continued globally by \textit{apriori} estimates.

\subsection{Global Existence of Solutions}

The aim here is to establish the global existence of solutions for the
equivalent system (\ref{EqSys}) and consequently the original system
(\ref{Sys1}) subject to the parabolicity condition (\ref{Para1}) through the
use of an appropriate Lyapunov functional. The results obtained here are
similar to those of \cite{Abdelmalek2016}. Hence, no detailed proofs will be
given here.

Let us define%
\begin{equation}
K_{l}^{r}=K_{r-1}^{r-1}K_{l}^{r-1}-\left[  H_{l}^{r-1}\right]  ^{2},\text{
}r=3,...,l\text{,} \label{mycond}%
\end{equation}
where%
\[
H_{l}^{r}=\underset{1\leq \ell,\kappa \leq l}{\det}\left(  \left(
a_{\ell,\kappa}\right)  _{\substack{\ell \neq l,...r+1\\ \kappa \neq
l-1,..r}}\right)  \overset{k=r-2}{\underset{k=1}{\Pi}}\left(  \det \left[
k\right]  \right)  ^{2^{\left(  r-k-2\right)  }},\text{ }r=3,...,l-1\text{,}%
\]%
\[
K_{l}^{2}=\underset{\text{positive value}}{\underbrace{\lambda_{1}\lambda
_{l}\overset{l-1}{\underset{k=1}{\Pi}}\theta_{k}^{2\left(  p_{k}+1\right)
^{2}}\overset{m-1}{\underset{k=l}{\Pi}}\theta_{k}^{2\left(  p_{k}+2\right)
^{2}}}}\left[  \overset{l-1}{\underset{k=1}{\Pi}}\theta_{k}^{2}-A_{1l}%
^{2}\right]  ,
\]
and%
\[
H_{l}^{2}=\underset{\text{positive value}}{\underbrace{\lambda_{1}%
\sqrt{\lambda_{2}\lambda_{l}}\theta_{1}^{2\left(  p_{1}+1\right)  ^{2}%
}\overset{l-1}{\underset{k=2}{\Pi}}\theta_{k}^{\left(  p_{k}+2\right)
^{2}+\left(  p_{k}+1\right)  ^{2}}\overset{m-1}{\underset{k=l}{\Pi}}\theta
_{k}^{2\left(  p_{k}+2\right)  ^{2}}}}\left[  \theta_{1}^{2}A_{2l}%
-A_{12}A_{1l}\right]  .
\]

The term $\underset{1\leq \ell,\kappa \leq l}{\det}\left(  \left(
a_{\ell,\kappa}\right)  _{\substack{\ell \neq l,...r+1\\ \kappa \neq
l-1,..r}}\right)  $ denotes the determinant of the $r$ square symmetric matrix
obtained from $\left(  a_{\ell,\kappa}\right)  _{1\leq \ell,\kappa \leq m}$ by
removing the $\left(  r+1\right)  ^{th},\left(  r+2\right)  ^{\text{th}%
},...,l^{\text{th}}$ rows and the $r^{\text{th}},\left(  r+1\right)
^{\text{th}},...,\left(  l-1\right)  ^{\text{th}}$ columns. where $\det \left[
1\right]  ,...,\det \left[  m\right]  $ \ are the minors of the matrix $\left(
a_{\ell,\kappa}\right)  _{1\leq \ell,\kappa \leq m}.$ The elements of the matrix
are:%
\begin{equation}
a_{\ell \kappa}=\frac{\lambda_{\ell}+\lambda_{\kappa}}{2}\theta_{1}^{p_{1}^{2}%
}...\theta_{\left(  \ell-1\right)  }^{p_{\left(  \ell-1\right)  }^{2}}%
\theta_{\ell}^{\left(  p_{\ell}+1\right)  ^{2}}...\theta_{\kappa-1}^{\left(
p_{\left(  \kappa-1\right)  }+1\right)  ^{2}}\theta_{\kappa}^{\left(
p_{\kappa}+2\right)  ^{2}}...\theta_{\left(  m-1\right)  }^{\left(  p_{\left(
m-1\right)  }+2\right)  ^{2}}. \label{1.13}%
\end{equation}
where $\lambda_{\ell}$ in (\ref{2.2})-(\ref{2.3}). Note that $A_{\ell \kappa
}=\dfrac{\lambda_{\ell}+\lambda_{\kappa}}{2\sqrt{\lambda_{\ell}\lambda
_{\kappa}}}$ for all $\ell,\kappa=1,...,m$, and $\theta_{\ell};$
$\ell=1,...,\left(  m-1\right)  $ are positive constants.

\begin{theorem}
\label{GlobalTheo}Suppose that the functions $\digamma_{\ell};$ $\ell=1,...,m$
are of polynomial growth and satisfy condition (\ref{4.2.2}) for some positive
constants $D_{\ell};$ $\ell=1,...,m$ sufficiently large. Let $\left(
w_{1}\left(  t,.\right)  ,w_{2}\left(  t,.\right)  ,...,w_{m}\left(
t,.\right)  \right)  $ be a solution of (\ref{EqSys}) and%
\begin{equation}
L(t)=\int_{\Omega}H_{p_{m}}\left(  w_{1}\left(  t,x\right)  ,w_{2}\left(
t,x\right)  ,...,w_{m}\left(  t,x\right)  \right)  dx, \label{5.3}%
\end{equation}
where%
\[
H_{p_{m}}\left(  w_{1},...,w_{m}\right)  =\overset{p_{m}}{\underset{p_{m-1}%
=0}{\sum}}...\overset{p_{2}}{\underset{p_{1}=0}{\sum}}C_{p_{m}}^{p_{m-1}%
}...C_{p_{2}}^{p_{1}}\theta_{1}^{p_{1}^{2}}...\theta_{\left(  m-1\right)
}^{p_{\left(  m-1\right)  }^{2}}w_{1}^{p_{1}}w_{2}^{p_{2}-p_{1}}%
...w_{m}^{p_{m}-p_{m-1}},
\]
with $p_{m}$ a positive integer and $C_{p_{\kappa}}^{p_{\ell}}=\frac
{p_{\kappa}!}{p_{\ell}!\left(  p_{\kappa}-p_{\ell}\right)  !}$.\newline Also
suppose that the following condition is satisfied%
\begin{equation}
K_{l}^{l}>0;\text{ }l=2,...,m\text{,} \label{1.12}%
\end{equation}
It follows from these conditions that the functional $L$ is uniformly bounded
on the interval $\left[  0,T^{\ast}\right]  ,$ $T^{\ast}<T_{\max}$.
\end{theorem}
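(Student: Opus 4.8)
The plan is to differentiate $L(t)$ along a solution of (\ref{EqSys}), substitute for the time derivatives using the equation, and split the outcome into a diffusion contribution and a reaction contribution:
\[
L'(t)=\underbrace{\int_{\Omega}\sum_{\ell=1}^{m}\lambda_{\ell}\frac{\partial H_{p_{m}}}{\partial w_{\ell}}\Delta w_{\ell}\,dx}_{=:I}+\underbrace{\int_{\Omega}\sum_{\ell=1}^{m}\frac{\partial H_{p_{m}}}{\partial w_{\ell}}\digamma_{\ell}\left(W\right)dx}_{=:J}.
\]
The whole argument is to show $I\leq0$ and $J\leq C_{3}(1+L(t))$ for a constant $C_{3}$ independent of $t$, and then to close with Gronwall's inequality, exactly as in \cite{Abdelmalek2016,Kouachi2010}; since the details parallel that prior work we only indicate the key steps.

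For $I$ one applies Green's formula. The resulting boundary integral $\int_{\partial\Omega}\sum_{\ell}\lambda_{\ell}\,\partial_{w_{\ell}}H_{p_{m}}\,\partial_{\eta}w_{\ell}\,d\sigma$ vanishes in the homogeneous Neumann case, and is nonpositive in the Dirichlet and Robin cases because the components $w_{\ell}$ and the data $\Lambda$ inherit the sign conditions (\ref{InvReg2}) on the chosen invariant region; hence
\[
I\leq-\int_{\Omega}\sum_{\ell,\kappa=1}^{m}\frac{\lambda_{\ell}+\lambda_{\kappa}}{2}\,\frac{\partial^{2}H_{p_{m}}}{\partial w_{\ell}\partial w_{\kappa}}\,\nabla w_{\ell}\cdot\nabla w_{\kappa}\,dx .
\]
Factoring out from each second partial derivative the monomial in $w_{1},\dots,w_{m}$ (nonnegative on the invariant region) and the powers of the $\theta_{k}$, the quadratic form under the integral has coefficient matrix precisely $\left(a_{\ell\kappa}\right)_{1\leq\ell,\kappa\leq m}$ of (\ref{1.13}). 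Thus $I\leq0$ provided $\left(a_{\ell\kappa}\right)$ is positive semidefinite, and establishing this is the heart of the proof. The quantities $K_{l}^{r},H_{l}^{r}$ obey the recursion (\ref{mycond}), which is nothing but a Desnanot--Jacobi (Dodgson condensation) identity for the bordered minors of $\left(a_{\ell\kappa}\right)$; by induction on $r$ one identifies $K_{l}^{l}$, up to a strictly positive factor built from $\lambda_{1},\lambda_{l}$ and the $\theta_{k}$, with the $l$-th leading principal minor of $\left(a_{\ell\kappa}\right)$ (equivalently, with the leading minors of the successive Schur complements obtained by eliminating the first variables). Condition (\ref{1.12}) is then exactly Sylvester's criterion, so $\left(a_{\ell\kappa}\right)$ is positive definite and $I\leq0$.

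For $J$, each $\partial_{w_{\ell}}H_{p_{m}}$ is a polynomial in $W$ of degree $p_{m}-1$, and the binomial structure of $H_{p_{m}}$ is designed so that $\sum_{\ell}\partial_{w_{\ell}}H_{p_{m}}\,\digamma_{\ell}$ reorganizes as $H_{p_{m}-1}(W)\langle D,\digamma(W)\rangle$ plus polynomials of degree at most $p_{m}-1$ whose coefficients are controlled once the $D_{\ell}$ are taken sufficiently large. Using the polynomial growth (A2) together with (\ref{4.2.2}), namely $\langle D,\digamma(W)\rangle\leq C_{2}(1+\langle W,1\rangle)$, and Young's inequality to absorb the remaining lower-order monomials into $H_{p_{m}}$, one obtains $J\leq C_{3}\bigl(1+\int_{\Omega}H_{p_{m}}(W)\,dx\bigr)=C_{3}(1+L(t))$. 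Combining the two estimates gives $L'(t)\leq C_{3}(1+L(t))$ on $(0,T_{\max})$, whence $L(t)\leq(L(0)+1)e^{C_{3}t}-1$ by Gronwall's lemma, which is finite and therefore uniformly bounded on every interval $[0,T^{\ast}]$ with $T^{\ast}<T_{\max}$. The main obstacle is the determinantal step: verifying rigorously that the recursively defined condition (\ref{1.12}) is equivalent to the positive definiteness of $\left(a_{\ell\kappa}\right)$, which requires a careful induction via the Desnanot--Jacobi identity while keeping track of the monomial weights so that no sign is lost; everything else is a routine adaptation of the Lyapunov-functional argument in \cite{Abdelmalek2016}.
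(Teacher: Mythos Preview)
Your proposal is correct and follows precisely the approach the paper intends: the paper gives no self-contained proof of this theorem, stating only that ``the results obtained here are similar to those of \cite{Abdelmalek2016}. Hence, no detailed proofs will be given here.'' Your outline---differentiating $L$, splitting into $I+J$, handling $I$ via Green's formula and the positive definiteness of $(a_{\ell\kappa})$ through the condensation identity (\ref{mycond}) and Sylvester's criterion, bounding $J$ by (A2)--(A3) and Young's inequality, then closing with Gronwall---is exactly the Lyapunov-functional argument of \cite{Abdelmalek2016,Kouachi2010} that the paper is invoking.
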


\begin{corollary}
\label{corollary1}Under the assumptions of theorem \ref{GlobalTheo}, all
solutions of (\ref{EqSys}) with positive initial data in $L^{\infty}\left(
\Omega \right)  $ are in $L^{\infty}\left(  0,T^{\ast};L^{p}\left(
\Omega \right)  \right)  $ for some $p\geq1$.
\end{corollary}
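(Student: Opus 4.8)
The plan is to deduce the $L^{p}$ bound directly from the uniform bound on the Lyapunov functional $L(t)$ supplied by Theorem \ref{GlobalTheo}, once we observe that, on the range of the solution, the integrand $H_{p_{m}}$ dominates $\sum_{\ell}w_{\ell}^{p_{m}}$. To get started I would first record that the solution of the diagonalized system (\ref{EqSys}) remains in the nonnegative orthant: the region $\Sigma_{\mathfrak{L},\emptyset}$ with $\mathfrak{L}=\{1,\dots,m\}$ translates, through $w_{\ell}=\langle V_{\ell},U\rangle$, into $w_{0,\ell}\ge 0$ for all $\ell$; condition (A1) makes each reaction term inward-pointing on the face $\{w_{\ell}=0\}$; and the transformed boundary data satisfies $\Lambda=P^{T}B\ge 0$ componentwise by (\ref{InvReg2}). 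Hence $\mathbb{R}_{+}^{m}$ is invariant for (\ref{EqSys}) and $w_{\ell}(t,x)\ge 0$ for all $(t,x)\in[0,T^{\ast}]\times\Omega$ and all $\ell$. Moreover, since $U_{0}\in L^{\infty}(\Omega)$ and $\Omega$ is bounded, $W_{0}=P^{T}U_{0}\in L^{\infty}(\Omega)\subset L^{q}(\Omega)$ for every exponent $q$, so $L(0)<\infty$ and the hypotheses of Theorem \ref{GlobalTheo} are in force.

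Next I would exploit the explicit form of $H_{p_{m}}$ in (\ref{5.3}). It is a polynomial in $w_{1},\dots,w_{m}$ all of whose coefficients $C_{p_{m}}^{p_{m-1}}\cdots C_{p_{2}}^{p_{1}}\,\theta_{1}^{p_{1}^{2}}\cdots\theta_{m-1}^{p_{m-1}^{2}}$ are strictly positive, and for each $\ell\in\{1,\dots,m\}$ the pure power $w_{\ell}^{p_{m}}$ appears among its monomials — taking $p_{1}=\dots=p_{\ell-1}=0$ and $p_{\ell}=\dots=p_{m-1}=p_{m}$ produces exactly $w_{\ell}^{p_{m}}$ — with a positive coefficient $c_{\ell}$. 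Since all the remaining monomials are nonnegative on $\mathbb{R}_{+}^{m}$, we obtain
\[
H_{p_{m}}(W)\ \ge\ \sum_{\ell=1}^{m}c_{\ell}\,w_{\ell}^{p_{m}}\ \ge\ c\sum_{\ell=1}^{m}w_{\ell}^{p_{m}}\qquad\text{for all }W\in\mathbb{R}_{+}^{m},
\]
with $c=\min_{\ell}c_{\ell}>0$ depending only on $p_{m}$ and the $\theta_{k}$.

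It then remains to integrate and invoke Theorem \ref{GlobalTheo}. That theorem gives a constant $M<\infty$ with $L(t)=\int_{\Omega}H_{p_{m}}\bigl(W(t,x)\bigr)\,dx\le M$ for all $t\in[0,T^{\ast}]$, so combining with the lower bound above,
\[
\int_{\Omega}w_{\ell}(t,x)^{p_{m}}\,dx\ \le\ \frac{1}{c}\int_{\Omega}H_{p_{m}}\bigl(W(t,x)\bigr)\,dx\ =\ \frac{1}{c}\,L(t)\ \le\ \frac{M}{c}
\]
for every $t\in[0,T^{\ast}]$ and every $\ell=1,\dots,m$. Hence $\sup_{0\le t\le T^{\ast}}\|w_{\ell}(t,\cdot)\|_{L^{p_{m}}(\Omega)}\le(M/c)^{1/p_{m}}<\infty$, i.e.\ $W\in L^{\infty}\bigl(0,T^{\ast};L^{p_{m}}(\Omega)\bigr)$, which is the assertion with $p=p_{m}\ge 1$.

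There is no real obstacle here beyond bookkeeping: the conclusion is essentially a one-line consequence of Theorem \ref{GlobalTheo} together with the elementary lower bound on $H_{p_{m}}$. The only point that genuinely needs attention is the invariance of $\mathbb{R}_{+}^{m}$ for the diagonalized system — that is, positivity of the components $w_{\ell}$ — since the inequality $H_{p_{m}}(W)\ge c\sum_{\ell}w_{\ell}^{p_{m}}$ relies on $W\in\mathbb{R}_{+}^{m}$; this is exactly where assumption (A1) and the sign conditions (\ref{InvReg2}) on the transformed boundary data $\Lambda$ are used.
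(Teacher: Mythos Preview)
Your argument is correct and follows precisely the standard route: use the positivity of the components $w_{\ell}$ (via the invariance of $\mathbb{R}_{+}^{m}$ for the diagonalized system) together with the fact that $H_{p_{m}}$ is a polynomial with strictly positive coefficients containing each pure power $w_{\ell}^{p_{m}}$, so that the uniform bound on $L(t)$ from Theorem~\ref{GlobalTheo} immediately controls $\|w_{\ell}(t,\cdot)\|_{L^{p_{m}}}$. The paper does not spell out a proof here---it explicitly defers to \cite{Abdelmalek2016}---but your derivation is exactly the argument that reference employs, so there is nothing to add.
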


\begin{proposition}
\label{proposition2}Under the assumptions of theorem \ref{GlobalTheo} and
given that the condition (\ref{Para1}) is satisfied, all solutions of
(\ref{EqSys}) with positive initial data in $L^{\infty}\left(  \Omega \right)
$ are global for some $p>\dfrac{MN}{2}$.
\end{proposition}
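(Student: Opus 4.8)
The plan is to deduce the global existence of solutions for the diagonal system \eqref{EqSys} (and hence for \eqref{Sys1}) from the uniform bound on the Lyapunov functional $L(t)$ established in Theorem \ref{GlobalTheo}, following the standard Hollis--Morgan $L^p$--bootstrap argument as carried out in \cite{Abdelmalek2016}. First I would invoke Theorem \ref{GlobalTheo}: under the polynomial growth hypothesis \eqref{4.2.1}, the structural inequality \eqref{4.2.2}, and the positivity conditions \eqref{1.12}, the functional $L(t)$ is uniformly bounded on every $[0,T^{\ast}]$ with $T^{\ast}<T_{\max}$. Since the polynomial $H_{p_m}(w_1,\dots,w_m)$ dominates a constant multiple of $\langle W,\mathbf 1\rangle^{p_m}$ on the positive cone (its leading homogeneous part has strictly positive coefficients once the $\theta_\ell$ are fixed), the boundedness of $L(t)$ gives a uniform bound on $\|W(t,\cdot)\|_{L^{p_m}(\Omega)}$ for all $t<T_{\max}$. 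This is precisely the content of Corollary \ref{corollary1}, which I would cite to pass from "$L$ bounded" to "$W\in L^{\infty}(0,T^{\ast};L^{p}(\Omega))$" for $p=p_m\geq 1$; note $p_m$ is a free positive integer, so $p$ may be taken as large as desired.

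Next I would run the bootstrap. By the local existence theory (Friedman \cite{Friedman1964}, Henry \cite{Henry1984}, Pazy \cite{Pazy1983}) recalled in Subsection \ref{SubSecLocal}, the solution is classical on $(0,T_{\max})$ and blows up in $L^{\infty}(\Omega)$ at $T_{\max}$ if $T_{\max}<\infty$; thus it suffices to produce an a priori $L^{\infty}$ bound. Using the polynomial growth bound \eqref{4.2.1}, $|\digamma_\ell(W)|\le C_1(1+\langle W,\mathbf 1\rangle)^N$, together with $W\in L^{\infty}(0,T^{\ast};L^{p}(\Omega))$, the reaction term lies in $L^{\infty}(0,T^{\ast};L^{p/N}(\Omega))$. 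Choosing $p$ large enough that $p/N>\tfrac{M}{2}$ — equivalently $p>\tfrac{MN}{2}$, which is exactly the threshold appearing in the statement — the smoothing estimates for the diagonal heat semigroup $e^{t\,\mathrm{diag}(\lambda_1,\dots,\lambda_m)\Delta}$ (valid because the $\lambda_\ell$ are strictly positive under \eqref{Para1}, by the Theorem of Section \ref{SecPara}) upgrade $W$ to $L^{\infty}(0,T^{\ast};L^{\infty}(\Omega))$; see the $L^p$--$L^q$ parabolic regularity used in \cite{Hollis1994,Henry1984}. The contribution of the boundary data $\Lambda=P^TB$ is harmless since $B$, and hence $\Lambda$, is a constant vector and the inhomogeneous boundary terms contribute bounded quantities under each of the Robin/Neumann/Dirichlet choices \eqref{EqBound1}--\eqref{EqBound2}. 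Since $T^{\ast}<T_{\max}$ was arbitrary, the $L^{\infty}$ bound is uniform up to $T_{\max}$, contradicting blow-up unless $T_{\max}=+\infty$; therefore the solution is global.

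The main obstacle I anticipate is the verification that the single $L^{p_m}$ bound coming from one Lyapunov functional is enough to close the bootstrap in one step rather than iteratively. In the classical Hollis--Morgan scheme one often needs a sequence of functionals (or a single functional with arbitrarily large $p_m$) to push the integrability above the critical Sobolev exponent $\tfrac{MN}{2}$; here the parameter $p_m$ in Theorem \ref{GlobalTheo} is a free positive integer and the conditions \eqref{1.12} on $K_l^l$ must be shown to hold for this large value of $p_m$ (the $K_l^l$ and the determinants $H_l^r$ depend on the $p_k$ through the exponents in \eqref{1.13}). The resolution, which mirrors \cite{Abdelmalek2016}, is that the dominant coefficients in $a_{\ell\kappa}$ are $\tfrac{\lambda_\ell+\lambda_\kappa}{2}$ and the positivity conditions reduce — after factoring out the manifestly positive products of $\theta_k$ powers — to inequalities on the $A_{\ell\kappa}=\tfrac{\lambda_\ell+\lambda_\kappa}{2\sqrt{\lambda_\ell\lambda_\kappa}}$ that are independent of $p_m$ and can be met by choosing the $\theta_\ell$ small enough; hence \eqref{1.12} holds for every $p_m$, and one simply takes $p_m>\tfrac{MN}{2}$. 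Everything else — the domination of $H_{p_m}$ by $\langle W,\mathbf 1\rangle^{p_m}$, the parabolic smoothing, and the contradiction with blow-up — is routine and can be referenced to \cite{Abdelmalek2016,Hollis1994,Henry1984,Pazy1983}.
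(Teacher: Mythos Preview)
Your proposal is correct and follows precisely the Hollis--Morgan $L^p$--bootstrap route that the paper adopts by reference: the paper itself supplies no proof of Proposition~\ref{proposition2}, stating explicitly that ``the results obtained here are similar to those of \cite{Abdelmalek2016}'' and that ``no detailed proofs will be given here.'' One minor slip in your obstacle discussion: from the expression for $K_l^2$ the positivity requires $\prod_{k=1}^{l-1}\theta_k^{2}>A_{1l}^{2}\geq 1$, so the $\theta_\ell$ must be taken \emph{large} enough (not small), though this does not affect the overall argument.
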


\section{Numerical Example\label{SecEx}}

In order to put the findings of this study to the test, let us consider the
following $5$-component system%
\begin{equation}
\dfrac{\partial U}{\partial t}-A\Delta U=F\left(  U\right)  , \label{Ex2}%
\end{equation}
where the transposed diffusion matrix is given by%
\begin{equation}
A^{T}=\left(
\begin{array}
[c]{ccccc}%
1 & 0.5 & 0 & 0 & 0\\
0.3 & 1.5 & 0.7 & 0 & 0\\
0 & 0.25 & 1 & 0.5 & 0\\
0 & 0 & 0.3 & 1.5 & 0.7\\
0 & 0 & 0 & 0.25 & 1
\end{array}
\right)  , \label{6.2}%
\end{equation}
and the reaction functional $F\left(  U\right)  $ is of the form%
\[
F\left(  U\right)  =\left(
\begin{array}
[c]{ccccc}%
F_{1} & F_{2} & F_{3} & F_{4} & F_{5}%
\end{array}
\right)  ^{T},
\]
with%
\[
F_{j}\left(  U\right)  =U^{T}\Upsilon_{j}U+\sigma_{j}^{T}U,\text{
\  \ }j=1,...,5.
\]
For the purpose of this example, let $\Upsilon_{j}$ be the symmetric matrices
given by%
\[
\Upsilon_{1}=\left(
\begin{array}
[c]{ccccc}%
0.0146 & -0.0257 & 0.0073 & -0.0088 & 0\\
-0.0257 & 0.0202 & -0.004 & 0 & 0.0044\\
0.0073 & -0.004 & 0.0005 & 0.0011 & -0.0015\\
-0.0088 & 0 & 0.0011 & -0.0043 & 0.0027\\
0 & 0.0044 & -0.0015 & 0.0027 & -0.0007
\end{array}
\right)  ,
\]%
\[
\Upsilon_{2}=\left(
\begin{array}
[c]{ccccc}%
0.1142 & 0.228 & 0.0571 & 0.1293 & 0\\
0.228 & -0.2281 & -0.0153 & 0 & -0.0646\\
0.0571 & -0.0153 & 0.0041 & 0.0158 & -0.0122\\
0.1293 & 0 & 0.0158 & 0.0489 & -0.0244\\
0 & -0.0646 & -0.0122 & -0.0244 & -0.0061
\end{array}
\right)  ,
\]%
\[
\Upsilon_{3}=\left(
\begin{array}
[c]{ccccc}%
0.3702 & -0.1245 & 0.1851 & 0.0194 & 0\\
-0.1245 & 0.0371 & -0.0817 & 0 & -0.0097\\
0.1851 & -0.0817 & 0.0132 & 0.0364 & -0.0397\\
0.0194 & 0 & 0.0364 & -0.0079 & 0.0133\\
0 & -0.0097 & -0.0397 & 0.0133 & -0.0198
\end{array}
\right)  ,
\]%
\[
\Upsilon_{4}=\left(
\begin{array}
[c]{ccccc}%
-0.1316 & -0.1013 & -0.0658 & -0.0743 & 0\\
-0.1013 & 0.1177 & 0.0236 & 0 & 0.0371\\
-0.0658 & 0.0236 & -0.0047 & -0.0154 & 0.0141\\
-0.0743 & 0 & -0.0154 & -0.0252 & 0.0108\\
0 & 0.0371 & 0.0141 & 0.0108 & 0.0070
\end{array}
\right)  ,
\]
and%
\[
\Upsilon_{5}=\left(
\begin{array}
[c]{ccccc}%
-0.1651 & 0.5295 & -0.0825 & 0.2108 & 0\\
0.5295 & -0.4429 & 0.0539 & 0 & -0.1054\\
-0.0825 & 0.0539 & -0.0059 & -0.0081 & 0.0177\\
0.2108 & 0 & -0.0081 & 0.0949 & -0.0567\\
0 & -0.1054 & 0.0177 & -0.0567 & 0.0088
\end{array}
\right)  .
\]
Also, suppose that%
\[
\left \{
\begin{array}
[c]{l}%
\sigma_{1}=\left(
\begin{array}
[c]{ccccc}%
0.0795 & 0.0303 & -0.0243 & -0.014 & 0.0059
\end{array}
\right)  ^{T}\\
\sigma_{2}=\left(
\begin{array}
[c]{ccccc}%
-0.6466 & -0.6144 & 0.0798 & 0.2844 & 0.0572
\end{array}
\right)  ^{T}\\
\sigma_{3}=\left(
\begin{array}
[c]{ccccc}%
0.4549 & -0.2791 & -0.2846 & 0.1292 & 0.1635
\end{array}
\right)  ^{T}\\
\sigma_{4}=\left(
\begin{array}
[c]{ccccc}%
0.2682 & 0.3879 & 0.0097 & -0.1796 & -0.0618
\end{array}
\right)  ^{T}\\
\sigma_{5}=\left(
\begin{array}
[c]{ccccc}%
-1.6033 & -0.8159 & 0.4251 & 0.3777 & -0.0608
\end{array}
\right)  ^{T}.
\end{array}
\right.
\]
The system clearly satisfies the parabolicity condition (\ref{Para1}) as%
\[
\frac{\sqrt{\alpha_{1}\alpha_{2}}}{\max \left \{  \beta_{1}+\gamma_{1},\beta
_{2}+\gamma_{2}\right \}  }=\frac{\sqrt{1.5}}{0.95}=1.2892>\cos \left(
\frac{\pi}{5}\right)  =0.8090.
\]
We have from (\ref{2.3})%
\begin{equation}
\beta=\sqrt{\frac{0.7\times0.25}{0.5\times0.3}}=1.0801. \label{6.4}%
\end{equation}
Hence, we can form the polynomial $p_{n}\left(  \mu \right)  $ as%
\[
\left \{
\begin{array}
[c]{l}%
p_{0}\left(  \mu \right)  =1,\text{ }p_{1}\left(  \mu \right)  =\mu \\
p_{2}\left(  \mu \right)  =\mu(\mu)-1=\mu^{2}-1,
\end{array}
\right.
\]
with solutions%
\begin{equation}
P_{1}=1\text{ and }P_{2}=-1. \label{6.5}%
\end{equation}

Now, the eigenvalues are $\alpha_{1}$ along with the solutions of the
following two equations derived from (\ref{2.4})%
\[
\left \{
\begin{array}
[c]{l}%
\frac{\left(  1-\lambda \right)  \left(  1.5-\lambda \right)  }{\sqrt
{0.5\times0.7\times0.3\times0.25}}-\frac{1}{1.0801}-1.0801=1\\
\frac{\left(  1-\lambda \right)  \left(  1.5-\lambda \right)  }{\sqrt
{0.5\times0.7\times0.3\times0.25}}-\frac{1}{1.0801}-1.0801=-1,
\end{array}
\right.
\]
which can be simplified to%
\[
\left \{
\begin{array}
[c]{l}%
6.1721\left(  \lambda-1\right)  \left(  \lambda-1.5\right)  -3.0059=0\\
6.172\,1\left(  \lambda-1\right)  \left(  \lambda-1.5\right)  -1.0059=0.
\end{array}
\right.
\]
Solving the two quadratic equations in $\lambda$ yields the four eigenvalues
of $A$, which in descending order can be given by%
\begin{equation}
\left \{
\begin{array}
[c]{l}%
\lambda_{1}=1.9913\\
\lambda_{2}=1.7248\\
\lambda_{3}=1\\
\lambda_{4}=0.77516\\
\lambda_{5}=0.50871.
\end{array}
\right.  \label{6.6}%
\end{equation}
Hence,%
\[
D=\operatorname*{diag}(\lambda_{1},\lambda_{2},\lambda_{3},\lambda_{4}%
,\lambda_{5}).
\]

Similarly, formula (\ref{2.7})-(\ref{2.7.1}) can be used to derive the
eigenvectors of $A^{T}$, which are arranged according to the corresponding
eigenvalues to form the diagonalizing matrix%
\begin{equation}
P=\left(
\begin{array}
[c]{ccccc}%
0.3848 & -0.5265 & -0.5632 & -0.9063 & -0.8769\\
0.7629 & -0.7633 & 0.5534 & 0.0000 & 0.3943\\
0.3705 & -0.0195 & -0.5423 & 0.3884 & -0.0325\\
0.3531 & 0.3534 & 0.2562 & 0.0000 & -0.1825\\
\underset{V_{1}}{\underbrace{0.0891}} & \underset{V_{2}}{\underbrace{0.1219}}
& \underset{V_{3}}{\underbrace{-0.1303}} & \underset{V_{4}}{\underbrace
{-0.1665}} & \underset{V_{5}}{\underbrace{0.2030}}%
\end{array}
\right)  . \label{6.7}%
\end{equation}
Matrix $P^{T}$ is used to diagonalize the system yields the equivalent system%
\begin{equation}
\left \{
\begin{array}
[c]{l}%
\frac{\partial w_{1}}{\partial t}-1.9913\Delta w_{1}=-0.5w_{1}w_{5}%
+0.65w_{2}\\
\frac{\partial w_{2}}{\partial t}-1.7248\Delta w_{2}=0.5w_{1}w_{5}-0.65w_{2}\\
\frac{\partial w_{3}}{\partial t}-\Delta w_{3}=-0.32w_{3}w_{5}+0.41w_{4}\\
\frac{\partial w_{4}}{\partial t}-0.77516\Delta w_{4}=0.32w_{3}w_{5}%
-0.41w_{4}\\
\frac{\partial w_{5}}{\partial t}-0.50871\Delta w_{5}=-0.5w_{1}w_{5}%
+0.65w_{2}-0.32w_{3}w_{5}+0.41w_{4}.
\end{array}
\right.  \label{Ex2Diag}%
\end{equation}
Note that for simplicity, we have neglected small terms and rounded the
polynomial coefficients to four decimal points. The resulting reaction terms
clearly satisfy conditions (A1) through (A3) as discussed in Section
\ref{SubSecLocal} above.

Observe that the proposed system has $2^{5}=32$ invariant regions where the
resulting $w_{\ell0}$ is guaranteed to be positive. We consider one of these
regions corresponding to $w_{\ell0}=\left \langle V_{\ell},U_{0}\right \rangle $
and given by%
\[
\Sigma_{\mathfrak{L},\emptyset}=\left \{  U_{0}\in \mathbb{%
\mathbb{R}
}^{m}:\left \langle V_{\ell},U_{0}\right \rangle \geq0,\text{ }\ell
=1,...,m\right \}  ,
\]
which yields five inequalities%
\begin{equation}
\left \{
\begin{array}
[c]{l}%
0.3848u_{01}+0.7629u_{02}+0.3705u_{03}+0.3531u_{04}+0.0891u_{05}\geq0\\
-0.5265u_{01}-0.7633u_{02}-0.0195u_{03}+0.3534u_{04}+0.1219u_{05}\geq0\\
-0.5632u_{01}+0.5534u_{02}-0.5423u_{03}+0.2562u_{04}-0.1303u_{05}\geq0\\
-0.9063u_{01}+0.3884u_{03}-0.1665u_{05}\geq0\\
-0.8769u_{01}+0.3943u_{02}-0.0325u_{03}-0.1825u_{04}+0.2030u_{05}\geq0,
\end{array}
\right.  \label{6.8}%
\end{equation}
with%
\[
U_{0}=\left(  u_{01},u_{02},u_{03},u_{04},u_{05}\right)  ^{T}.
\]
Solving this system of inequalities yields the first region where the initial
data is assumed to lie. We will consider for instance the initial data%
\begin{equation}
U_{0}=\left(  0,15,14,29,20\right)  ^{T}. \label{6.9}%
\end{equation}

The equivalent diagonalized system (\ref{Ex2Diag}) was solved numerically by
means of the finite difference (FD) method. Figures \ref{Fig1} and \ref{Fig2}
show the solutions to the diagonalized system (\ref{Ex2Diag}) and the original
system (\ref{Ex2}), respectively, in the diffusion free case. In the one
dimensional case, a sinusoidal perturbation is added to the initial data to
introduce spatial diversity into the model. The solutions are shown in Figures
\ref{Fig3} and \ref{Fig4}.

\begin{figure}[ptb]
\centering \includegraphics[width = 4in]{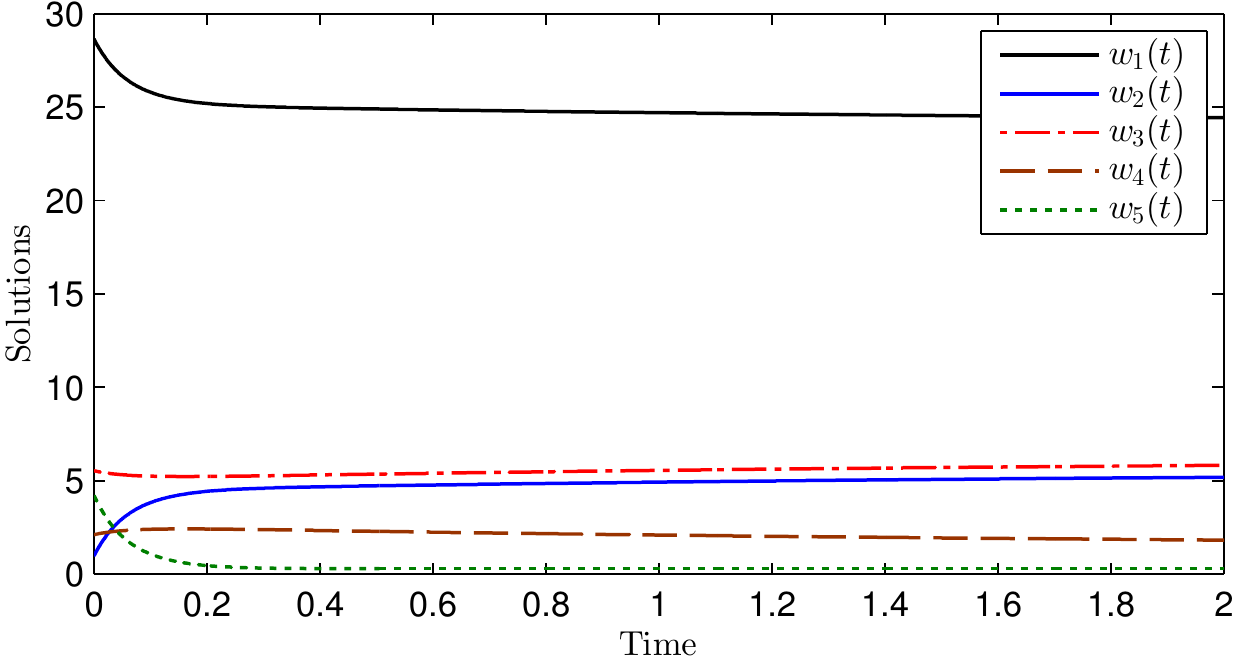}\caption{The
solutions of the equivalent diagonal system described in (\ref{Ex2Diag}) in
the diffusion-free case with the initial data given in (\ref{6.9}).}%
\label{Fig1}%
\end{figure}

\begin{figure}[ptb]
\centering \includegraphics[width = 4in]{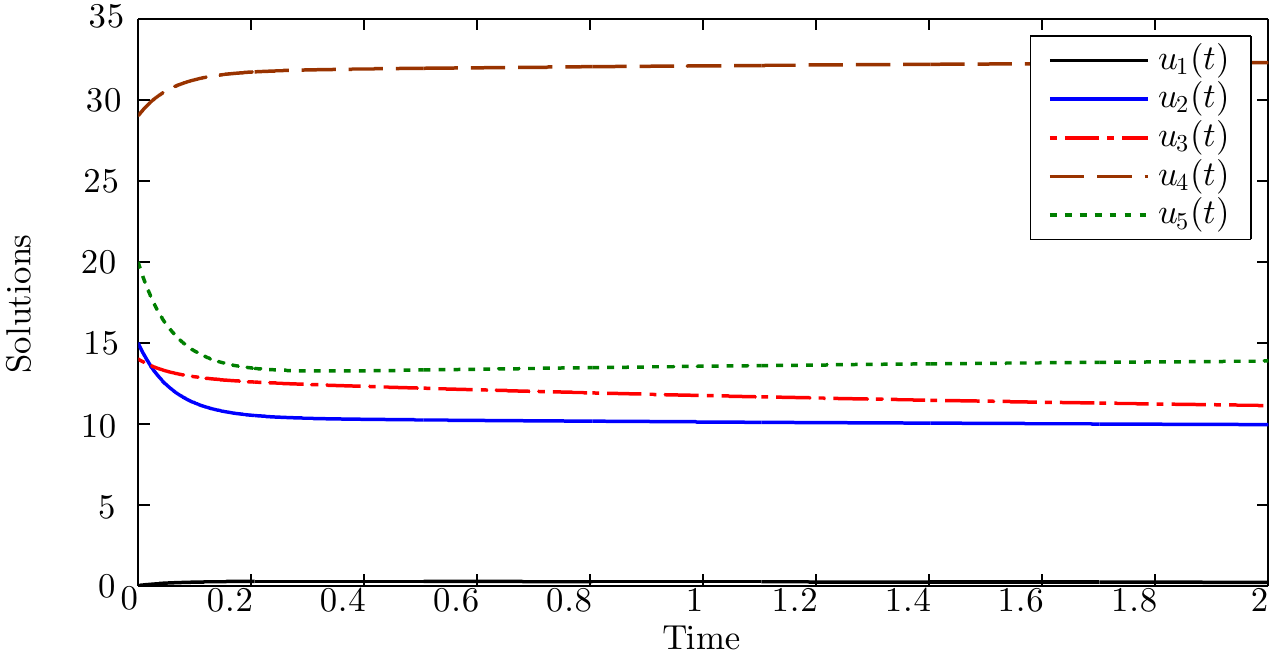}\caption{The
solutions of the original system described in (\ref{Ex2}) in the
diffusion-free case with the initial data given in (\ref{6.9}).}%
\label{Fig2}%
\end{figure}

\begin{figure}[ptb]
\centering \includegraphics[width = \textwidth]{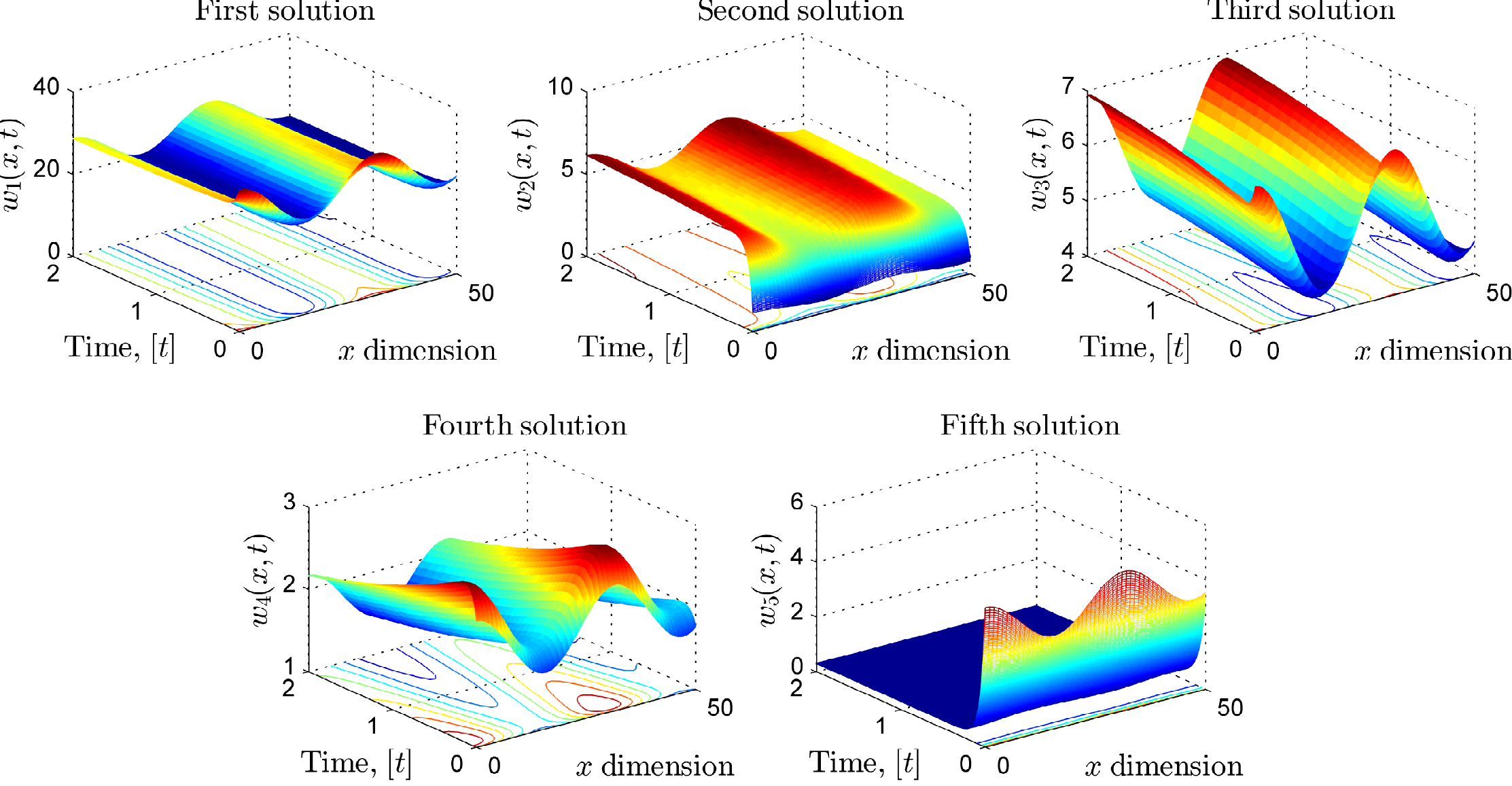}\caption{The
solutions of the equivalent diagonal system described in (\ref{Ex2Diag}) in
the one-dimensional diffusion case.}%
\label{Fig3}%
\end{figure}

\begin{figure}[ptb]
\centering \includegraphics[width = \textwidth]{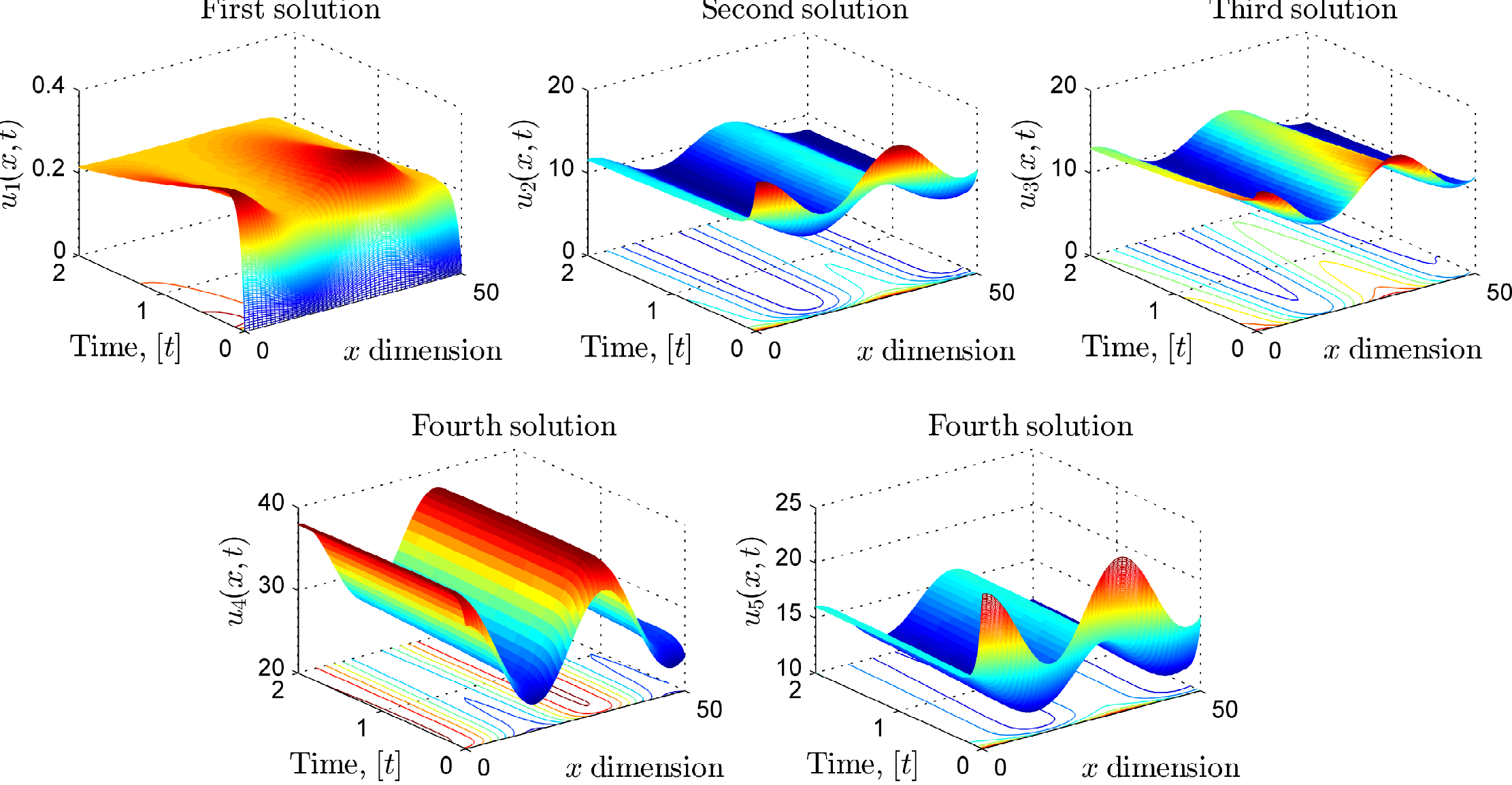}\caption{The
solutions of the original system described in (\ref{Ex2}) in the
one-dimensional diffusion case.}%
\label{Fig4}%
\end{figure}

\end{document}